\theoremstyle{plain}
\newtheorem{thm}{Theorem}[section]
\newtheorem{lem}[thm]{Lemma}
\newtheorem{prop}[thm]{Proposition}
\newtheorem{cor}[thm]{Corollary}
\theoremstyle{definition}
\newtheorem{df}[thm]{Definition}
\theoremstyle{remark}
\newtheorem{rem}[thm]{Remark}
\newcommand{\ds}{\mathrm{\,d}s}
\newcommand{\dt}{\mathrm{\,d}t}
\newcommand{\dx}{\mathrm{\,d}x}
\newcommand{\dy}{\mathrm{\,d}y}
\newcommand{\dz}{\mathrm{\,d}z}
\newcommand{\esssup}{\operatornamewithlimits{ess\, sup\,}}
\newcommand{\R}{\mathbb{R}}
\newcommand{\N}{\mathbb{N}}
\newcommand{\Z}{\mathbb{Z}}
\newcommand{\K}{\mathbb{K}}
\newcommand{\Kj}{\mathbb{K}_1}
\newcommand{\Kd}{\mathbb{K}_2}
\newcommand{\hra}{\hookrightarrow}
\newcommand{\M}{\mathscr M}
\newcommand{\MM}{\mathscr M_+}
\newcommand{\Lqw}{\Lambda^q(w)}
\newcommand{\Up}{U^p}
\newcommand{\Upm}{U^\frac{p}{m}}
\newcommand{\CL}{{C\!L^{m,p}(u,v)}}
\newcommand{\CLj}{{C\!L^{1,p}(u,v)}}
\let\plusminus\pm
\newcommand{\jq}{\frac1q}
\newcommand{\jp}{\frac1p}
\newcommand{\jr}{\frac1r}
\newcommand{\mjp}{{-\frac1p}}
\renewcommand{\rq}{{\frac rq}}
\newcommand{\qp}{{\frac qp}}
\newcommand{\qqp}{{\frac {q'}p}}
\newcommand{\qr}{{\frac qr}}
\renewcommand{\rq}{\frac{r}{q}}
\renewcommand{\pm}{\frac pm}
\renewcommand{\mp}{\frac mp}
\newcommand{\fii}{\varphi}
\newcommand{\sumk}{\sum_{k\in\K}}
\newcommand{\sumkj}{\sum_{k\in\Kj}}
\newcommand{\sumkd}{\sum_{k\in\Kd}}
\newcommand{\sumkmj}{\sum_{k\in\K-1}}
\newcommand{\sumkmd}{\sum_{k\in\K-2}}
\newcommand{\tj}{{t_j}}
\newcommand{\tk}{{t_k}}
\newcommand{\tjmj}{{t_{j-1}}}
\newcommand{\tkmj}{{t_{k-1}}}
\newcommand{\tkmd}{{t_{k-2}}}
\newcommand{\tkpj}{{t_{k+1}}}
\renewcommand{\dj}{{\Delta_j}}
\newcommand{\dk}{{\Delta_k}}
\newcommand{\djmj}{{\Delta_{j-1}}}
\newcommand{\djmd}{{\Delta_{j-2}}}
\newcommand{\dkmj}{{\Delta_{k-1}}}
\newcommand{\dkmd}{{\Delta_{k-2}}}
\newcommand{\intdj}{\int_{\dj}}
\newcommand{\intdk}{\int_{\dk}}
\newcommand{\intnn}{\int_0^\infty}
\newcommand{\intsn}{\int_s^\infty}
\newcommand{\inttn}{\int_t^\infty}
\newcommand{\intdjmj}{\int_{\Delta_{j-1}}}
\newcommand{\intdjmd}{\int_{\Delta_{j-2}}}
\newcommand{\intdjmt}{\int_{\Delta_{j-3}}}
\newcommand{\intdkmj}{\int_{\Delta_{k-1}}}
\newcommand{\intdkmd}{\int_{\Delta_{k-2}}}
\newcommand{\intdkmt}{\int_{\Delta_{k-3}}}
\newcommand{\intdkpj}{\int_{\Delta_{k+1}}}
\newcommand{\ro}{\varrho}
\newcommand{\f}{f^*}
\newcommand{\g}{g^*}
\newcommand{\h}{h^*}
\newcommand{\ff}{f^{**}}
\renewcommand{\gg}{g^{**}}
\newcommand{\lt}{\left(}
\newcommand{\rt}{\right)}
\newcounter{bcount}
\newcommand{\Bdef}[1]{\refstepcounter{bcount}\label{bcounter#1}}
\newcommand{\B}[1]{B_{\ref{bcounter#1}}}
\newtoks\by
\newtoks\paper
\newtoks\book
\newtoks\jour
\newtoks\yr
\newtoks\pages
\newtoks\vol
\newtoks\publ
\newtoks\eds
\newtoks\proc
\def\ota{{\hbox{???}}}
\def\cLear{\by=\ota\paper=\ota\book=\ota\jour=\ota\yr=\ota
\pages=\ota\vol=\ota\publ=\ota}
\def\endpaper{\textsc{\the\by}, \textit{\the\paper},
{\the\jour} \textbf{\the\vol} (\the\yr), \the\pages.\cLear}
\def\endbook{\textsc{\the\by}, {\the\book}, \the\publ.\cLear}
\def\endprep{\textsc{\the\by}, \textit{\the\paper}, \the\jour.\cLear}
\def\endproc{\textsc{\the\by}, \textit{\the\paper}, \the\publ, \the\pages.\cLear}
\def\name#1#2{#1\,#2}
\def\et{ and }
\begin{document}
\title{Embeddings and associated spaces of Copson-Lorentz spaces}
\author{Martin K\v{r}epela}
\address{Department of Applied Mathematics, University of Freiburg, Ernst-Zermelo-Sra\ss e 1, 79104 Freiburg im Breisgau, Germany}
\email{martin.krepela@math.uni-freiburg.de}
\subjclass[2010]{Primary 46E30, Secondary 47G10}
\keywords{rearrangement-invariant space, Copson operator, embedding, associated space, weight, discretization}
\begin{abstract}
  Let $m,p,q\in(0,\infty)$ and let $u,v,w$ be nonnegative weights. We characterize validity of the inequality 
    \[
      \lt \intnn w(t) (\f(t))^q \dt \rt^\jq \le C \lt \intnn v(t) \lt \inttn u(s) (\f(s))^m \ds \rt^\pm \!\! \dt \rt^\jp
    \]
  for all measurable functions $f$ defined on $\R^n$ and provide equivalent estimates of the optimal constant $C>0$ in terms of the weights and exponents. The obtained conditions characterize the embedding of the Copson-Lorentz space $\CL$, generated by the functional
    \[
      \|f\|_\CL := \lt \intnn v(t) \lt \inttn u(s) (\f(s))^m \ds \rt^\pm \!\! \dt \rt^\jp,
    \]
  into the Lorentz space $\Lqw$. Moreover, the results are applied to describe the associated space of the Copson-Lorentz space $\CL$ for the full range of exponents $m,p\in(0,\infty)$.
\end{abstract}   
\maketitle

\section{Introduction}

Let us start by presenting the notation used throughout this paper. An~introductory summary of the contents and purpose of the paper may be found further below in this section.\\

The cone of all real-valued measurable functions on $\R^n$ is denoted by $\M$. Next, $\MM$ denotes the cone of all locally integrable nonnegative functions on the interval $(0,\infty)$. \emph{A~weight} is any nonnegative measurable function defined on $(0,\infty)$.

The symbol $A\lesssim B$ means that there exists a~constant $\gamma\in(0,\infty)$ ``independent of relevant quantities in $A$ and $B$'' such that $A\le \gamma B$. More precisely, such constant $\gamma$ depends only on exponents $m,p,q$, unless specified else. It is written $A\approx B$ if both $A\lesssim B$ and $B\lesssim A$ hold. 

If $p\in(0,\infty)$ and $p\neq 1$, \emph{the conjugated exponent} $p'$ is defined by $p':=\frac p{p-1}$. If $0<p<1$, the exponent $p'$ is therefore negative. Conjugated exponents $m'$ and $q'$ are defined analogously.

Let $f\in\M$. Then $\f$ denotes \emph{the nonincreasing rearrangement of} $f$ defined by
  \[
    \f(t):=\inf \left\{ s\ge 0;\ \left|\left\{ x\in\R^n;\ |f(x)|>s \right\}\right|\le t \right\}
  \]
for all $t>0$. In here, the symbol $|E|$ stands for the ($n$-dimensional Lebesgue) measure of a~set $E\subset\R^n$. Next, \emph{the Hardy-Littlewood maximal function of $\f$} is defined by
  \[
    \ff(t) := \frac1t \int_0^t \f(s)\ds,
  \]
for all $t>0$. 

Let $m,p\in(0,\infty)$ and let $u,v$ be weights. Recall the definitions of \emph{the weighted Lorentz spaces} $\Lambda$ and $\Gamma$ which read as follows. 
  \begin{align*}  
    \Lambda^p(v)& := \left\{ f\in\M;\ \|f\|_{\Lambda^p(v)} := \lt \intnn v(t)(\f(t))^p \dt \rt^\jp <\infty \right\}, \\
    \Gamma^p(v)& := \left\{ f\in\M;\ \|f\|_{\Gamma^p(v)} := \lt \intnn v(t)(\ff(t))^p \dt \rt^\jp <\infty \right\}.
  \end{align*}
The notion of a~$\Gamma$ space may be further generalized, giving
  \[
    \Gamma^{m,p}(u,v) := \left\{ f\in\M;\ \|f\|_{\Gamma^{m,p}(u,v)} := \lt \intnn v(t) \lt \int_0^t u(s)(\f(s))^m \ds \rt^\frac pm \!\!\dt \rt^\jp \right\}.
  \]
The main object of interest in this paper is \emph{the Copson-Lorentz space} $\CL$, defined by
  \[
    \CL := \left\{ f\in\M;\ \|f\|_{\CL}:= \lt \intnn v(t) \lt \inttn u(s) (\f(s))^m \ds \rt^\pm \!\! \dt \rt^\jp <\infty \right\}.
  \]
This structure is labeled a~``space'' despite not necessarily being a~linear set, let alone a~normed linear space. An~example of such a~pathological case is constructed by letting $m=p=1$, $u(t)=1$, $v(t):=e^t$ and $V(t):=\int_0^t v(s)\ds = e^t-1$ for all $t>0$. Then $\CL=\Lambda^1(V)$ and 
  \[
    \lim_{t\to\infty} \frac{\int_0^{2t} V(s)\ds}{\int_0^t V(s)\ds} = \lim_{t\to\infty} \frac{e^{2t}-2t-1}{e^t-t-1} = \infty.
  \]
By the characterization of linearity of $\Lambda$ spaces \cite[Theorem 1.4]{CKMP}, $\CL$ is not a~linear set.

The $\Lambda$ spaces are not always linear sets either, as seen above. The $\Gamma$ spaces are at least linear spaces thanks to sublinearity of the mapping $f\to\ff$, the functional $\|\cdot\|_{\Gamma^p(v)}$ however does not have to be a~norm (see \cite{KM,Sou}). Nevertheless, the term ``space'' is used to describe all these structures, for the sake of simplicity.

Suppose that $\|\cdot\|_X : \M\to[0,\infty]$ is a~functional such that $\|\lambda f \|_X = |\lambda|\|f\|_X$ and $\|f\|_X\le\|g\|_X$ for all $\lambda\in[0,\infty)$ and $f,g\in\M$ such that $|f|\le |g|$ a.e.~on $\R^n$. Let $\|\cdot\|_Y$ be another functional with the same properties. Let $X$, $Y$ be two function ``spaces'' given by $X=\{f\in\M;\ \|f\|_X<\infty\}$ and $Y=\{f\in\M;\ \|f\|_Y<\infty\}$. Then $X$ is said to be \emph{embedded in} $Y$ and it is written $X\hra Y$ if there exists a~constant $C\in(0,\infty)$ such that
  \begin{equation}\label{115}
    \|f\|_Y \le C \|f\|_X
  \end{equation}
for all $f\in\M$. The infimum of all $C\in(0,\infty]$ such that \eqref{115} holds for all $f\in\M$ is called \emph{the optimal constant}. Hence, if $X$ is not embedded in $Y$, the optimal constant in \eqref{115} is infinite. 

Assume that $X$ is moreover \emph{rearrangement-invariant}, i.e., that $\|f\|_X=\|h\|_X$ whenever $f,h\in\M$ are such that $\f=\h$ on $(0,\infty)$. Then \emph{the associated space of} $X$, denoted $X'$, is defined as $X'=\{g\in\M;\ \|g\|_{X'}<\infty\}$, where
  \[  
    \|g\|_{X'}:= \sup \left\{ \intnn \f(t)\g(t)\dt;\ \|f\|_X\le 1 \right\}.
  \]
If $X$ is a~Banach function space (see \cite{BS}), then $X'$ is a~Banach function space as well. However, to define $X'$ in the way described above is possible even for a~more general $X$, though without claiming space-like properties of $X'$. For more details, see \cite{BS}.

As it can be observed from the previous definition, embeddings into $\Lambda$ spaces play a~rather significant role since the associate ``norm'' $\|g\|_X$ of a~function $g\in\M$ is equal to the optimal constant related to the embedding $X\hra \Lambda^1(\g)$. Hence, one gets a~description of the associate space to $X$ once the embedding $X\hra \Lambda^1(w)$ has been reasonably characterized.

Unlike the converse embedding $\Lambda\hra X$, which in case of rearrangement-invariant $X$ can be approached by the so-called reduction theorems (see \cite{GS}), the embedding $X\hra\Lambda$ is comparatively hard to characterize. A~great difficulty arises especially when the functional $\|\cdot\|_X$ involves a~more complicated operator. Results covering such problems in a~reasonable way have been rather sparse so far.

One of the few examples of rearrangement-invariant spaces whose embedding into $\Lambda$ was successfully and satisfactorily characterized are the $\Gamma$ spaces, including their generalized variants. The results of this type were obtained in \cite{GP,GPS} by a~method of discretization. Various steps in this direction were made even earlier, see e.g.~\cite{GHS,CPSS}. However, the older results included implicit and/or discrete conditions which were of little use. The main achievement of \cite{GS} was therefore the so-called ``anti-discretization'' by which one could turn the discrete expressions to explicit integral conditions.\\

The purpose of this paper is to prove a~full characterization of the embedding $\CL\hra\Lambda^q(w)$ for $m,p,q\in(0,\infty)$. In other words, the goal is to give necessary and sufficient conditions on the weights $u,v,w$ and exponents $m,p,q$ such that the inequality 
  \begin{equation}\label{30}
    \lt \intnn w(t) (\f(t))^q \dt \rt^\jq \le C \lt \intnn v(t) \lt \inttn u(s) (\f(s))^m \ds \rt^\pm \!\! \dt \rt^\jp
  \end{equation}
holds for all functions $f\in\M$. Moreover, equivalent estimates of the optimal constant $C$ in this inequality are provided. Very little assumptions on the weights $u,v$ are taken here and these, in fact, only assure that the space $\CL$ is not ``degenerate'' (see Section 3). 

Notice also that for $p=m$ the right-hand side of \eqref{30} becomes the ``norm'' of $f$ in $\Lambda^p(\varrho)$ with the weight $\varrho$ defined as $\varrho(t):=u(t)\int_0^t v(s)\ds$ for $t>0$. In this case, inequality \eqref{30} corresponds to a~``$\Lambda\hra\Lambda$'' embedding, and characterizations of its validity are therefore already known (see \cite{AM,Sa,CPSS} and the references therein).

The proofs in this paper are based on a~discretization technique which is developed to fit the Copson-Lorentz functional. Hence, it differs from the one used in \cite{GP}, although the methods of this paper and \cite{GP} share certain root ideas and have some common features. The conditions obtained here have an~explicit integral form, so the ``anti-discretization'' process makes a~significant part of the work done here. 

To the author's knowledge, not many attempts have been done on solving the problems ``$X\hra\Lambda$'' by discretizing the right-hand side of the requested inequalities such as \eqref{30} in cases when the right-hand side is not a~$\Gamma$ norm or its generalization. This paper shows that it is possible. It suggests that, with a~proper modification, the discretization is an~effective method which can be rather universally used to handle inequalities involving rearrangement-invariant functionals. It is also shown here how such a~modification can be made in order to fit a~particular functional (in this case the Copson-Lorentz one).

Since the estimates of the optimal constant $C$ in \eqref{30} are obtained, they are also applied here directly to provide a~characterization of the associate ``norm'' $\|\cdot\|_{(\CL)'}$. Results of this type are very desirable. For instance, the ``dual'' expression of a~norm can be used to deal with inequalities involving very complicated operators. A~recent example of this is the use of a~duality method in \cite{GKPS} to solve a~notoriously difficult problem of comparison of generalized $\Gamma$ spaces.

As for the structure for the paper, Section 2 consists of a~list of known basic results which are used throughout the article. Section 3 is devoted to proving the core discretization results, in particular the equivalent expression of the Copson-Lorentz functional $\|\cdot\|_{\CL}$ by means of a~discretizing sequence, i.e., a~special partition of $(0,\infty)$, and the fundamental function of $\CL$. The main results are obtained in Section 4 where all the characterizing conditions of the embedding $C\!L\hra\Lambda$ can be found. Finally, the description of $(\CL)'$, i.e., the associate space of $\CL$, is given in Section 5.

\section{Preliminaries}

Some details and proofs of the following auxillary propositions may be found, for example, in \cite{GHS,GP,K}.

\begin{prop}[H\"older inequality]\label{40}
  Let $k_{\min},k_{\max}\in\Z\cup\{\plusminus\infty\}$ be such that $k_{\min}<k_{\max}$. Let $\{a_k\}_{k=k_{\min}}^{k_{\max}}$ and $\{b_k\}_{k=k_{\min}}^{k_{\max}}$ be two nonnegative sequences. Assume that $0<q<p<\infty$. Then
    \[
      \left(\sum_{k=k_{\min}}^{k_{\max}} a_k^q b_k\right)^\frac1q \le \left(\sum_{k=k_{\min}}^{k_{\max}} a_k^p\right)^\frac1p \left( \sum_{k=k_{\min}}^{k_{\max}} b_k^\frac{p}{p-q}\right)^\frac{p-q}{pq}.
    \]  
  Moreover, there exists a~nonnegative sequence $\{c_k\}_{k=k_{\min}}^{k_{\max}}$ such that $\sum_{k=k_{\min}}^{k_{\max}} c_k^p=1$ and
    \[
      \left( \sum_{k=k_{\min}}^{k_{\max}} b_k^\frac{p}{p-q}\right)^\frac{p-q}{pq} = \left(\sum_{k=k_{\min}}^{k_{\max}} c_k^q b_k\right)^\frac1q.
    \]
\end{prop}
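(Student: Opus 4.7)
The plan is to reduce the inequality to the classical H\"older inequality on sequence spaces by raising everything to a convenient power, and then to construct the optimizing sequence $\{c_k\}$ explicitly so that it achieves equality.

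For the first part, set $r := p/q > 1$ so that its conjugate exponent is $r' = p/(p-q)$. Apply the standard H\"older inequality to the sequences $\{a_k^q\}$ and $\{b_k\}$ with these exponents:
\[
  \sum_{k=k_{\min}}^{k_{\max}} a_k^q b_k \le \left(\sum_{k=k_{\min}}^{k_{\max}} a_k^p\right)^{q/p} \left(\sum_{k=k_{\min}}^{k_{\max}} b_k^{p/(p-q)}\right)^{(p-q)/p}.
\]
Raising both sides to the power $1/q$ yields the claimed inequality. The only issue to note is that the standard H\"older inequality is usually stated for finite or countably infinite sequences indexed by $\mathbb{N}$, but the indexing range $k_{\min}\le k\le k_{\max}$ with values in $\mathbb{Z}\cup\{\pm\infty\}$ is handled by a trivial reindexing.

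For the sharpness statement, I would take, in the nontrivial case when $0<\sum b_k^{p/(p-q)}<\infty$,
\[
  c_k := \frac{b_k^{1/(p-q)}}{\Bigl(\sum_{j} b_j^{p/(p-q)}\Bigr)^{1/p}}, \qquad k_{\min}\le k \le k_{\max}.
\]
Then $c_k^p = b_k^{p/(p-q)}/\sum_j b_j^{p/(p-q)}$, so $\sum_k c_k^p = 1$. Using $q/(p-q)+1 = p/(p-q)$,
\[
  \sum_{k} c_k^q b_k = \frac{\sum_k b_k^{q/(p-q)+1}}{\Bigl(\sum_j b_j^{p/(p-q)}\Bigr)^{q/p}} = \left(\sum_{k} b_k^{p/(p-q)}\right)^{(p-q)/p},
\]
which after raising to the power $1/q$ gives exactly the right-hand factor. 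The degenerate cases (one of the sums being $0$ or $+\infty$) can be disposed of by choosing $c_k$ to be $1$ on a single index where $b_k>0$, or an arbitrary sequence summing to $1$, and reading the claimed identity in the extended sense $0\cdot\infty = 0$.

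The only conceptual obstacle here is bookkeeping of the exponents; there is no real difficulty. I would present the proof in the order: state the conjugate pair, apply H\"older, take $q$-th roots, then verify the optimizer by direct computation, and finally dispose of the trivial edge cases in one sentence.
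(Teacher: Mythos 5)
The paper offers no proof of Proposition~\ref{40} at all (it is quoted from the literature, cf.~the references given at the start of Section~2), so the only question is whether your argument stands on its own. The main body does: H\"older with the conjugate pair $p/q$ and $p/(p-q)$ applied to $\{a_k^q\}$ and $\{b_k\}$ gives the inequality after taking $q$-th roots, and the normalized choice $c_k=b_k^{1/(p-q)}\bigl(\sum_j b_j^{p/(p-q)}\bigr)^{-1/p}$ achieves equality whenever $0<\sum_j b_j^{p/(p-q)}<\infty$; the case where all $b_k=0$ is trivial. This is the standard argument and the exponent bookkeeping is correct.

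The one step that fails as written is your disposal of the divergent case $\sum_j b_j^{p/(p-q)}=\infty$ (with every $b_k$ finite). There the asserted identity forces $\sum_k c_k^q b_k=\infty$ subject to $\sum_k c_k^p=1$, and a point mass $c_{k_0}=1$ at an index with $b_{k_0}>0$ produces only the finite value $b_{k_0}$; the convention $0\cdot\infty=0$ does not enter. The statement is nevertheless true in this case, but it needs a sentence of its own: since every tail of the divergent sum is still infinite, one can pick pairwise disjoint finite blocks $F_n$, $n\ge1$, with $B_n:=\sum_{k\in F_n}b_k^{p/(p-q)}\ge 2^{np/(p-q)}$, and set $c_k^p:=2^{-n}b_k^{p/(p-q)}/B_n$ for $k\in F_n$ and $c_k:=0$ otherwise. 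Then $\sum_k c_k^p=\sum_n 2^{-n}=1$, while $\sum_{k\in F_n}c_k^q b_k=2^{-nq/p}B_n^{(p-q)/p}\ge 2^{n(p-q)/p}\to\infty$, so $\sum_k c_k^q b_k=\infty$ as required (if some $b_{k_0}=+\infty$ is allowed, the point mass does suffice). This edge case is not vacuous for the paper: the saturation is invoked in the necessity proofs, e.g.~at \eqref{70} and \eqref{107}, precisely to conclude that the discrete quantities there are dominated by $C$, which requires the conclusion also when those sums are a priori infinite.
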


\begin{prop}\label{20}
  Let $0<\alpha<\infty$ and $1<D<\infty$. Then there exists a~constant $C_{\alpha,D}\in(0,\infty)$ such that for any $k_{\min},k_{\max}\in\Z\cup\{\plusminus\infty\}$, $k_{\min}<k_{\max}$, and any two nonnegative sequences $\{b_k\}_{k=k_{\min}}^{k_{\max}}$ and $\{c_k\}_{k=k_{\min}}^{k_{\max}}$, satisfying $b_{k+1}\ge D\, b_k$  for all $k\in\Z,\ k_{\min}\le k <k_{\max}$, there holds
    \[
      \sum_{k=k_{\min}}^{k_{\max}} \left( \sum_{m=k}^{k_{\max}} c_m \right)^\alpha b_k  \le C_{\alpha,D} \sum_{k=k_{\min}}^{k_{\max}} c_k^\alpha b_k, 
    \]
    \[
      \sum_{k=k_{\min}}^{k_{\max}} \left(\sup_{k\le m \le k_{\max}} \!\! c_m \right)^\alpha b_k  \le C_{\alpha,D}  \sum_{k=k_{\min}}^{k_{\max}} c_k^\alpha b_k 
    \]
  and
    \[
      \sup_{k_{\min}\le k \le k_{\max}} \left( \sum_{m=k}^{k_{\max}} c_m \right)^\alpha b_k  \le C_{\alpha,D} \sup_{k_{\min}\le k \le k_{\max}} c_k^\alpha b_k. 
    \]
\end{prop}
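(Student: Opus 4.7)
All three estimates hinge on two elementary consequences of the geometric growth $b_{k+1}\ge Db_k$: namely, for every $s>0$, $\sum_{k_{\min}\le k\le m}b_k\le\frac{D}{D-1}b_m$ and $\sum_{m\ge k}b_m^{-s}\le(1-D^{-s})^{-1}b_k^{-s}$. These will be used silently throughout.

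For the first (sum-sum) estimate I would split according to the size of $\alpha$. When $\alpha\le 1$, subadditivity $(\sum a_m)^\alpha\le\sum a_m^\alpha$ reduces the left-hand side to $\sum_k b_k\sum_{m\ge k}c_m^\alpha$, after which swapping the order of summation and invoking the first geometric bound finishes. When $\alpha>1$, I would instead apply H\"older's inequality with an auxiliary factor $b_m^\beta$ for some fixed $0<\beta<\frac1\alpha$ (say $\beta=\frac1{2\alpha}$): writing $c_m=(c_mb_m^\beta)\cdot b_m^{-\beta}$ and using the conjugate exponents $\alpha,\alpha'$ yields
\[
  \Bigl(\sum_{m\ge k}c_m\Bigr)^{\!\alpha}\le \Bigl(\sum_{m\ge k}c_m^\alpha b_m^{\alpha\beta}\Bigr)\Bigl(\sum_{m\ge k}b_m^{-\beta\alpha'}\Bigr)^{\!\alpha/\alpha'}\lesssim b_k^{-\alpha\beta}\sum_{m\ge k}c_m^\alpha b_m^{\alpha\beta}.
\]
Multiplying by $b_k$, swapping sums, and bounding $\sum_{k_{\min}\le k\le m}b_k^{1-\alpha\beta}\lesssim b_m^{1-\alpha\beta}$ (valid because $1-\alpha\beta>0$) collapses everything to $\sum_m c_m^\alpha b_m$.

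The second (sum-sup) estimate is immediate from $(\sup_{m\ge k}c_m)^\alpha\le\sum_{m\ge k}c_m^\alpha$ combined with the already-handled $\alpha\le 1$ subcase of the first estimate. For the third (sup-sum) estimate, I would factor $c_m=(c_mb_m^{1/\alpha})\cdot b_m^{-1/\alpha}$ and extract the supremum: $\sum_{m\ge k}c_m\le\bigl(\sup_{m\ge k}c_mb_m^{1/\alpha}\bigr)\sum_{m\ge k}b_m^{-1/\alpha}\lesssim b_k^{-1/\alpha}\sup_{m\ge k}c_mb_m^{1/\alpha}$. Raising to the $\alpha$-th power, multiplying by $b_k$, and taking the supremum in $k$ then yields the required bound in one line.

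The only step requiring genuine thought is the $\alpha>1$ subcase of the first estimate, where the auxiliary exponent $\beta$ must lie strictly between $0$ and $1/\alpha$ so that \emph{both} the H\"older remainder factor $\sum b_m^{-\beta\alpha'}$ and the post-swap sum $\sum_{k_{\min}\le k\le m}b_k^{1-\alpha\beta}$ converge geometrically; the remaining cases are routine bookkeeping.
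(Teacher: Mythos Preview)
Your argument is correct. The paper does not supply its own proof of this proposition; it refers the reader to \cite{GHS,GP,K} for details, and the route you outline is precisely the standard one found there: subadditivity for $\alpha\le 1$, a weighted H\"older trick with an auxiliary exponent $0<\beta<1/\alpha$ for $\alpha>1$, and a sup-extraction for the third inequality. One tiny point you might make explicit: the negative powers $b_m^{-\beta\alpha'}$ and $b_m^{-1/\alpha}$ are well defined in the relevant ranges because the growth hypothesis $b_{k+1}\ge Db_k$ forces $b_m>0$ for every $m\ge k$ once $b_k>0$, and the terms with $b_k=0$ contribute nothing to either side.
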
    

\begin{prop}\label{21}
  Let $0<\alpha<\infty$ and $1<D<\infty$. Then there exists a~constant $C_{\alpha,D}\in(0,\infty)$ such that for any $k_{\min},k_{\max}\in\Z\cup\{\plusminus\infty\}$, $k_{\min}<k_{\max}$, and any two nonnegative sequences $\{b_k\}_{k=k_{\min}}^{k_{\max}}$ and $\{c_k\}_{k=k_{\min}}^{k_{\max}}$, satisfying $b_k\ge D\,b_{k+1}$ for all $k\in\Z,\ k_{\min}\le k <k_{\max}$, there holds
    \[
      \sum_{k=k_{\min}}^{k_{\max}} \left( \sum_{m=k_{\min}}^{k} c_m \right)^\alpha b_k  \le C_{\alpha,D} \sum_{k=k_{\min}}^{k_{\max}} c_k^\alpha b_k, 
    \]
    \[
      \sum_{k=k_{\min}}^{k_{\max}} \left(\sup_{k_{\min}\le m \le k} \!\! c_m \right)^\alpha b_k  \le C_{\alpha,D}  \sum_{k=k_{\min}}^{k_{\max}} c_k^\alpha b_k 
    \]
  and
    \[
      \sup_{k_{\min}\le k \le k_{\max}} \left( \sum_{m=k_{\min}}^{k} c_m \right)^\alpha b_k  \le C_{\alpha,D} \sup_{k_{\min}\le k \le k_{\max}} c_k^\alpha b_k. 
    \]
\end{prop}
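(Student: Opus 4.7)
The plan is to deduce Proposition~\ref{21} directly from Proposition~\ref{20} by reversing the indexing. I would introduce the reflected sequences $\tilde{b}_j := b_{-j}$ and $\tilde{c}_j := c_{-j}$ on the mirrored range $j\in[-k_{\max},-k_{\min}]$, using the convention that $-(+\infty)=-\infty$ and $-(-\infty)=+\infty$ so that the mirrored range is again of the form admissible for Proposition~\ref{20}.

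A short verification then shows that the decay hypothesis $b_k\ge D\,b_{k+1}$ for the original sequence translates into the growth hypothesis $\tilde{b}_{j+1}\ge D\,\tilde{b}_j$ for the reflected one. Moreover, the substitution $\ell=-m$, $\tilde{k}=-k$ turns each partial sum $\sum_{m=k_{\min}}^{k} c_m$ into the tail sum $\sum_{\ell=\tilde{k}}^{-k_{\min}} \tilde{c}_\ell$, and each partial supremum $\sup_{k_{\min}\le m\le k} c_m$ into the tail supremum $\sup_{\tilde{k}\le \ell\le -k_{\min}} \tilde{c}_\ell$. Consequently each of the three inequalities claimed in Proposition~\ref{21} reindexes term by term into the corresponding inequality of Proposition~\ref{20} applied to $(\tilde{b},\tilde{c})$ on the range $[-k_{\max},-k_{\min}]$, with exactly the same constant $C_{\alpha,D}$.

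I do not expect any real obstacle here: the only care required is bookkeeping for the infinite endpoints, which is handled uniformly by the reflection $k\mapsto -k$. The constant transfers verbatim, and no additional hypothesis or estimate beyond Proposition~\ref{20} is needed. An alternative, more ad hoc route would split into the cases $0<\alpha\le 1$ (using subadditivity of $t\mapsto t^\alpha$, then swapping the order of summation and absorbing via the geometric decay of $b_k$) and $\alpha\ge 1$ (a discrete Minkowski or Hardy-type estimate), but the reflection argument is cleaner and avoids any case distinction.
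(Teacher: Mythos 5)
Your reflection argument is correct. The index change $j=-k$, $\tilde b_j=b_{-j}$, $\tilde c_j=c_{-j}$ on the range $[-k_{\max},-k_{\min}]$ does convert the decay hypothesis $b_k\ge D\,b_{k+1}$ into the growth hypothesis $\tilde b_{j+1}\ge D\,\tilde b_j$ exactly on the mirrored range, and the substitution $\ell=-m$ turns the head sums $\sum_{m=k_{\min}}^{k}c_m$ and head suprema into the tail sums and tail suprema appearing in Proposition~\ref{20}; since all terms are nonnegative, the reindexing of the (possibly infinite) sums is harmless, and the constant $C_{\alpha,D}$ indeed transfers verbatim. Note, however, that the paper does not prove Proposition~\ref{21} (nor Proposition~\ref{20}) at all: both are listed among the preliminaries as known results, with proofs delegated to the cited literature (\cite{GHS,GP,K}), where the direct argument is essentially the case-split route you sketch at the end (subadditivity of $t\mapsto t^{\alpha}$ for $\alpha\le1$, a H\"older/geometric-series absorption for $\alpha\ge1$). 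So your contribution is a clean, self-contained reduction of Proposition~\ref{21} to Proposition~\ref{20}, which is perfectly legitimate within the paper's logical structure, but it is conditional: it does not remove the need for an independent proof of Proposition~\ref{20} itself, which remains an external input.
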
  

Another basic ingredient is the case of the Hardy inequality shown below. For its proof see \cite{SS} and the references therein.

\begin{prop}[Hardy inequality]\label{35}
  Let $a,b\in[0,\infty]$ and let $\eta,\ro$ be weights. 
  
  {\rm(i)}
        Let $1\le q<\infty$. Then the inequality
          \[
            \lt \int_a^b \lt \int_t^b h(s)\ds \rt^q \ro(t)\dt \rt^\jq \lesssim \int_a^b h(t)\eta(t) \dt\, \esssup_{t\in(a,b)} \lt \int_a^t \ro(s)\ds \rt^\jq \eta^{-1}(t)  
          \]
        holds for all $h\in\MM(a,b)$. Moreover, there exists a~function $g\in\MM(a,b)$ such that $\int_a^b g\eta = 1$ and 
          \[
            \esssup_{t\in(a,b)} \lt \int_a^t \ro(s)\ds \rt^\jq \eta^{-1}(t) \lesssim  \lt \int_a^b \lt \int_t^b g(s)\ds \rt^q \ro(t)\dt \rt^\jq.
          \]
  
  {\rm(ii)}
        Let $0<q<1$. Then the inequality
          \[
            \lt \int_a^b \lt \int_t^b h(s)\ds \rt^q \ro(t)\dt \rt^\jq \lesssim \int_a^b h(t)\eta(t) \dt\, \lt \int_a^b \lt \int_a^t \ro(s) \ds \rt^{-q'} \!\!\!\! \ro(t) \esssup_{x\in(t,b)} \eta^{q'}(x)\,dt \rt^{-\frac1{q'}}  
          \]
        holds for all $h\in\MM(a,b)$. Moreover, there exists a~function $g\in\MM(a,b)$ such that $\int_a^b g\eta = 1$ and 
          \[
            \lt \int_a^b \lt \int_a^t \ro(s) \ds \rt^{-q'} \!\!\!\!\ro(t) \esssup_{x\in(t,b)} \eta^{q'}(x)\,dt \rt^{-\frac1{q'}} \lesssim  \lt \int_a^b \lt \int_t^b g(s)\ds \rt^q \ro(t)\dt \rt^\jq.
          \]
\end{prop}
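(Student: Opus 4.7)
Both parts are weighted Hardy-type inequalities with a supremum weight condition; in each part the direct estimate and the existence of a near-extremal $g$ are dual manifestations of the same fact. I would treat the ranges $q\ge 1$ and $q<1$ separately.

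For part (i), the direct inequality follows from Minkowski's integral inequality. Writing $R(t):=\int_a^t\ro(s)\ds$ and exchanging the order of integration (which is permissible since $q\ge 1$), one obtains
\[
\lt \int_a^b \lt \int_t^b h(s)\ds \rt^q \ro(t)\dt \rt^\jq \le \int_a^b h(s) R(s)^\jq \ds = \int_a^b h(s)\eta(s)\cdot R(s)^\jq \eta^{-1}(s)\ds,
\]
and pulling $\esssup_{s\in(a,b)} R(s)^\jq \eta^{-1}(s)$ out of the last integral yields the desired bound. For the ``moreover'' part, I would pick a Lebesgue point $t_0\in(a,b)$ of $\eta$ at which $R(t_0)^\jq\eta^{-1}(t_0)$ is at least half the essential supremum, set $I_\eps:=\int_{t_0-\eps}^{t_0}\eta(s)\ds$, and choose $g_\eps:=\chi_{(t_0-\eps,t_0)}/I_\eps$. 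Then $\int_a^b g_\eps\eta=1$ and $\int_t^b g_\eps(s)\ds = \eps/I_\eps$ for every $t<t_0-\eps$. Integrating the $q$-th power of this against $\ro$ over $(a,t_0-\eps)$ gives the lower bound $(\eps/I_\eps)R(t_0-\eps)^\jq$, which by Lebesgue differentiation tends to $\eta^{-1}(t_0)R(t_0)^\jq$ as $\eps\to 0^+$, a quantity comparable to the essential supremum.

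For part (ii), the range $q<1$ is precisely where the main obstacle lies, since Minkowski's inequality is no longer available. Following \cite{SS}, I would use the reverse-H\"older duality for $L^q(\ro)$ with $0<q<1$, namely
\[
\lt\int_a^b F^q\ro\rt^\jq \;=\; \inf_{\psi\ge 0} \frac{\int_a^b F\psi}{\lt\int_a^b \psi^{q'}\ro^{1-q'}\rt^{1/q'}}
\]
for nonnegative $F$, where the right-hand side remains positive despite $q'<0$. Applied to $F(t):=\int_t^b h$ and combined with Fubini, the numerator becomes $\int_a^b h(s)\Psi(s)\ds$ with $\Psi(s):=\int_a^s\psi(t)\dt$, and I would bound this by $\|h\eta\|_1\cdot\esssup_{s\in(a,b)}\Psi(s)\eta^{-1}(s)$. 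The extremization over $\psi$ then reduces to an explicit optimization whose critical point is proportional to $\psi(t):=R(t)^{-q'}\ro(t)\esssup_{x\in(t,b)}\eta^{q'}(x)$; substituting this choice reproduces exactly the bracketed expression on the right-hand side of the claim, and the existence of the near-extremal $g$ in the reverse direction follows from the same duality correspondence together with a standard approximation argument. The hardest step is this extremization: balancing the essential supremum of $\eta^{q'}$ against integration with respect to $\ro$, and handling degenerate configurations of the weights, is what requires the most care.
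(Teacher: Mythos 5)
Part (i) of your proposal is essentially sound: the Minkowski (order-of-integration) step gives the upper bound, and the Lebesgue-point construction $g_\eps=\chi_{(t_0-\eps,t_0)}/I_\eps$ yields the saturating function, up to the routine extra argument needed when the essential supremum is infinite (a single $g$ must still be produced, e.g.\ by superposing suitably scaled $g_{\eps_n}$'s). Note that the paper itself does not prove this proposition at all but refers to \cite{SS}, so for (i) your elementary argument is an acceptable substitute.

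The genuine gap is in part (ii). The reverse-H\"older duality formula and the Fubini reduction to an extremization over $\psi$ are correct, but the asserted explicit critical point $\psi(t)=R(t)^{-q'}\ro(t)\esssup_{x\in(t,b)}\eta^{q'}(x)$ is wrong, and substituting it does not ``reproduce the bracketed expression.'' Concretely, take $a=0$, $b=1$, $\eta\equiv\ro\equiv1$ and any $q\in[\tfrac12,1)$: the bracketed constant is then a finite constant depending only on $q$, while your choice gives $\psi(t)=t^{-q'}$ and $\int_0^1\psi^{q'}(t)\ro^{1-q'}(t)\dt=\int_0^1 t^{-(q')^2}\dt=\infty$ since $(q')^2\ge1$, so the denominator $\lt\int\psi^{q'}\ro^{1-q'}\rt^{1/q'}$ equals $0$ and the resulting bound is vacuous (for $q$ slightly below $\tfrac12$ the constant it produces blows up as well). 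In this example the correct minimizer is $\psi\equiv1$, not your formula: the true extremization is a \emph{constrained} problem — one must choose a nondecreasing primitive $\Psi$ dominated (essentially) by $\essinf_{x\in(s,b)}\eta(x)$ while keeping $\int\psi^{q'}\ro^{1-q'}$ under control — and showing that its value is comparable to the stated bracket is exactly the level-function/``down-dual'' difficulty that the cited proof in \cite{SS} is designed to overcome; it is not settled by a pointwise Euler--Lagrange computation. Likewise, the ``moreover'' part of (ii), the construction of a single $g$ with $\int_a^b g\eta=1$ attaining the constant, is only asserted and does not follow formally from the duality once the forward extremization is missing. As written, the case $0<q<1$ is therefore not proved.
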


\section{Discretization of the Copson-Lorentz functional}

What follows is the core of the discretization method used in this article. The results of this section may be also used to deal with other questions related to the Copson-Lorentz spaces and similar problems.

\begin{df}\label{13}
  Let $u,v$ be weights and $m,p\in(0,\infty)$. Let $\fii:[0,\infty]\to[0,\infty]$ be a~mapping defined by
    \[
      \fii(t):= \lt \int_0^t v(s)\lt \int_s^t u(x)\dx \rt^\pm \ds \rt^\jp
    \]
  for every $t\in(0,\infty]$. Then $\fii$ is called \emph{the fundamental function of} $\CL$.
\end{df}

One can easily observe that the value of the fundamental function of $\CL$ at a~point $t>0$ is equal to $\|\chi_E\|_{\CL}$, where $E$ is any subset of $\R^n$ of measure $t$ and $\chi_E$ is its characteristic function. The definition above therefore corresponds to the standard terminology used in the context of rearrangement-invariant spaces (cf.~\cite{BS}).

From now on, certain assumptions will be imposed on the weights $u,v$. As it can be seen below, these assumptions are reasonable and exclude only some ``degenerate'' cases of $\CL$.

\begin{df}\label{8}
  Let $m,p\in(0,\infty)$ and let $u,v$ be weights. Then $(u,v)$ is called \emph{an~admissible pair of weights with respect to} $(m,p)$ if $0<\fii(t)<\infty$ for all $t\in(0,\infty)$.
\end{df}

\begin{rem}\label{6}
  (i) Let $m,p\in(0,\infty)$, let $(u,v)$ be an~admissible pair of weights with respect to $(m,p)$ and let $\fii$ be the fundamental function of $\CL$. Then $\fii$ is continuous and nondecreasing on $(0,\infty)$ and $\lim_{t\to 0+}\fii(t)=0$. The last fact may be verified by writing
    \begin{align*}
      \infty > \fii^p(t) & = \int_0^t v(s) \lt\int_s^t u(x)\dx\rt^{\pm} \hspace{-5pt} \ds = \pm \int_0^t v(s) \int_s^t \lt \int_s^x u(y)\dy \rt^{\pm-1} \hspace{-10pt} u(x)\dx \ds \\
              & = \pm \int_0^t u(x) \int_0^x v(s) \lt \int_s^x u(y)\dy \rt^{\pm-1} \hspace{-10pt} \ds \dx. 
    \end{align*}
  Since the integrand in the last expression does not depend on $t$, the value of the integral converges to zero as $t\to 0+$. Besides that, the derivative $\fii'$ exists a.e.~on $(0,\infty)$ and the identity
    \begin{equation}\label{44}
      \fii'(t) = \frac{p^{\jp-1}}{m^\jp}\, \fii^{1-p}(t)\, u(t) \int_0^t v(s) \lt \int_s^t u(y)\dy \rt^{\pm-1} \hspace{-10pt} \ds 
    \end{equation}
  holds for a.e.~$t\in(0,\infty)$ with the convention
    \begin{equation}\label{116}
      \frac1\infty := 0, \qquad 0.\infty := 0, \qquad 0^0 := 1
    \end{equation}
  in force to prevent undefined terms from appearing in case that $p\le m$ and $u$ is equal to zero on a~nontrivial interval. The function $\fii$ also has the mean value property. In particular, for every $t\in(0,\infty)$ there exists an~$x\in(0,t)$ such that $\fii^p(x)=2^{-\pm-1}\fii^p(t)$ and if, moreover, $\fii(\infty)=\infty$, then there also exists an~$s\in(t,\infty)$ such that $\fii^p(s)=2^{\pm+1}\fii^p(t)$. Obviously, the function $\int_0^\cdot v(s)\ds$ has the mean value property as well.

  (ii) If $m,p\in(0,\infty)$ and a~pair of weights $(u,v)$ is not admissible with respect to $(m,p)$, then there exists a~$t\in(0,\infty)$ such that either $\fii(t)=0$ or $\fii(t)=\infty$. If so, there exists a~set $E\in\R^n$ of finite positive measure and such that $\|\chi_E\|_{\CL}=0$ or $\|\chi_E\|_{\CL}=\infty$. In either case, it excludes the possibility of $\CL$ being a~Banach function space in the sense of Luxemburg's definition (see \cite{BS} for details). 
\end{rem} 

\begin{thm}\label{1}
  Let $m,p\in(0,\infty)$, let $(u,v)$ be an~admissible pair of weights with respect to $(m,p)$ and let $\fii$ be the fundamental function of $\CL$. Then there exist a~$K\in\{0,\infty\}$, a~set 
	  \begin{equation}\label{defk}
		  \K:=\begin{cases}
			  \{k\in\Z,~k\le0\} & \text{if }K=0,\\
			  \Z & \text{if }K=\infty,
			\end{cases} 
	  \end{equation}
	sets $\Kj$ and $\Kd$ such that $\Kj\cap\Kd=\varnothing$, $\Kj\cup\Kd=\K$ and a~sequence $\{\tk\}_{k\in\K}$ with the following properties: $0<t_{k-1}\le\tk<\infty$ is satisfied for all $k\in\K\setminus\{K\}$; the term $t_0$ is defined as $\infty$ in case that $K=0$; the inequalities
    \begin{equation} \label{2}
      \int_0^{t_{k}} v(t)\dt \ge 2^{\pm+1} \int_0^{t_{k-1}} v(t)\dt,
    \end{equation}
    \begin{equation} \label{3}
      \fii^p(\tk) \ge 2^{\pm+1} \fii^p(t_{k-1}) \dt 
    \end{equation}
  hold for all $k\in\K$; the identity 
    \begin{equation} \label{4}
      \int_0^{t_{k}} v(t)\dt = 2^{\pm+1} \int_0^{t_{k-1}} v(t)\dt
    \end{equation}
  holds for all $k\in\Kj$; the identity 
    \begin{equation} \label{5}
      \fii^p(\tk) = 2^{\pm+1} \fii^p(t_{k-1})
    \end{equation}
  holds for all $k\in\Kd$.
\end{thm}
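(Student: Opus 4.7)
Set $V(t):=\int_0^t v(s)\ds$ and $D:=2^{\pm+1}$. By Remark~\ref{6}(i) and admissibility, both $V$ and $\fii^p$ are continuous and nondecreasing on $(0,\infty)$ with $V(0+)=\fii^p(0+)=0$, and both are strictly positive on $(0,\infty)$; indeed, $V(t)=0$ would force $v=0$ a.e.\ on $(0,t)$ and hence $\fii(t)=0$, contradicting admissibility. Write $V(\infty),\fii^p(\infty)\in(0,\infty]$ for the limits at infinity. The strategy is to build $\{t_k\}$ so that each passage from $t_{k-1}$ to $t_k$ multiplies both $V$ and $\fii^p$ by at least $D$, with one of them attaining the factor $D$ exactly; whether $V$ or $\fii^p$ delivers the equality dictates whether $k$ lies in $\Kj$ or in $\Kd$.

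If $V(\infty)=\fii^p(\infty)=\infty$, set $K:=\infty$, $\K:=\Z$, pick any $t_0\in(0,\infty)$, and extend the sequence in both directions recursively. For the forward step from $t_{k-1}$, let
\[
  \sigma_1:=\inf\{s>t_{k-1}:V(s)\ge D\,V(t_{k-1})\},\qquad \sigma_2:=\inf\{s>t_{k-1}:\fii^p(s)\ge D\,\fii^p(t_{k-1})\},
\]
both finite, and set $t_k:=\max(\sigma_1,\sigma_2)\in(t_{k-1},\infty)$. Continuity gives $V(\sigma_1)=D\,V(t_{k-1})$ and $\fii^p(\sigma_2)=D\,\fii^p(t_{k-1})$, so \eqref{2} and \eqref{3} hold at $k$; moreover \eqref{4} is an equality when $\sigma_1\ge\sigma_2$ (assign $k\in\Kj$) and \eqref{5} is an equality otherwise (assign $k\in\Kd$). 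The backward step is symmetric: let $s_1,s_2$ be the suprema of $\{s<t_k:V(s)\le V(t_k)/D\}$ and $\{s<t_k:\fii^p(s)\le\fii^p(t_k)/D\}$, and set $t_{k-1}:=\min(s_1,s_2)\in(0,t_k)$, where positivity follows from $V(t_k)/D>0$ and continuity of $V$ at $0$.

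If at least one of $V(\infty),\fii^p(\infty)$ is finite, set $K:=0$, $\K:=\{k\in\Z:k\le 0\}$, $t_0:=\infty$, and produce $t_{-1}$ by the backward rule applied at this top step with $V(t_0),\fii^p(t_0)$ replaced by $V(\infty),\fii^p(\infty)$: whichever of the two limits is infinite makes the corresponding constraint vacuous ($s_i:=\infty$) and forces $0$ into the opposite subset of $\K$, while if both are finite the same minimum rule applies and either assignment is admissible. The remaining $t_{-2},t_{-3},\ldots$ are then obtained by iterating the backward step of the previous case, staying in $(0,\infty)$ by the same positivity argument.

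The principal difficulty is not the recursion itself, which rests on routine continuity and monotonicity together with the vanishing of $V,\fii^p$ at the endpoints of $(0,\infty)$, but the coherent handling of the top of the scale when $V(\infty)$ or $\fii^p(\infty)$ is finite, where one must simultaneously identify $K=0$ and place the top index in $\Kj$ or $\Kd$ so that the equalities \eqref{4}--\eqref{5} stay compatible with the inequalities \eqref{2}--\eqref{3}.
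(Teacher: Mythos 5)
Your proof is correct and follows essentially the same construction as the paper: a two-sided recursion that at each step takes the maximum (forward) or minimum (backward) of the exact points where $\int_0^\cdot v$ and $\fii^p$ change by the factor $2^{\pm+1}$, assigning $k$ to $\Kj$ or $\Kd$ according to which quantity attains equality, with $K=0$ precisely when one of $\int_0^\infty v$, $\fii^p(\infty)$ is finite. Your explicit treatment of the top step $t_0=\infty$ (making the constraint vacuous when the corresponding limit is infinite) is in fact a slightly more careful rendering of what the paper does implicitly via the mean value property of Remark~\ref{6}(i).
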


\begin{proof}
  \emph{Step 1.} 
    If $\intnn v(t)\dt<\infty$ or $\fii(\infty)<\infty$, put $K:=0$ and $t_0:=\infty$. Otherwise, put $K:=\infty$ and $t_0:=1$. Define the set $\K$ by \eqref{defk}. 
  
  \emph{Step 2.}
    Suppose that $k\in\Z,$ $k\le0$ and $t_k$ is defined. There exist (cf.~Remark \ref{6}(i)) points $x_{k},y_{k}\in(0,\tk)$ such that
      \[
        \int_0^{x_{k}}v(t)\dt = \frac1{2^{\pm+1}}\int_0^{t_k}v(t)\dt \quad \textnormal{and}\quad \fii^p(y_{k})= \frac1{2^{\pm+1}}\fii^p(t_k).
      \]
    Define $t_{k-1}:=\min\{x_k,y_k\}$. Then \eqref{2} and \eqref{3} are both satisfied and one of the identities \eqref{4} and \eqref{5} holds true as well. One continues by induction, replacing $k$ with $k-1$ and repeating Step 2. In this manner, the part of the sequence $\{t_k\}$ indexed by nonpositive integers is constructed.
    
  \emph{Step 3.}
    Suppose that $k\in\N$, $t_{k-1}$ is defined and $t_{k-1}<\infty$. There exist (cf.~Remark \ref{6}(i)) points $z_k,s_k\in(t_{k-1},\infty)$ such that 
      \[
        \int_0^{z_k}v(t)\dt = 2^{\pm+1} \int_0^{t_{k-1}}v(t)\dt \quad\textnormal{and}\quad \fii^p(s_k)=2^{\pm+1}\fii^p(t_{k-1}).
      \]
    Define $t_k:=\max\{z_k,s_k\}$. Then the conditions \eqref{2}, \eqref{3} and one of \eqref{4} and \eqref{5} are satisfied. Again, one proceeds by induction, replacing $k$ with $k+1$ and repeating Step 3. Notice that if $K=0$, Step 3 is never performed. If $K=\infty$, the part of the sequence $\{t_k\}$ indexed by positive integers is constructed.
  
  \emph{Step 4.}
    Define 
      \[
        \Kj:=\{k\in\Z,\ k\le0,\ t_{k-1}=x_k\} \cup \{k\in\N,\ t_k=z_k\}
      \]
    and $\Kd:=\K\setminus \Kj$. Obviously, $\Kj\cap\Kd=\varnothing$, $\K=\Kj\cup\Kd$, and, thanks to the construction of $\{t_k\}_{k\in\K}$, identity \eqref{4} holds for all $k\in\Kj$ while \eqref{5} holds for all $k\in\Kd.$
\end{proof} 

\begin{df}\label{9}
  In the setting of Theorem \ref{1}, the sequence $\{\tk\}_{k\in\K}$ obtained there is called \emph{a~discretizing sequence of} $\CL$. For each $k\in\K$ denote 
    \[
      \dk:=[\tk,t_{k+1}]
    \]
  and
    \[
      U(\dk):=U(\tk,t_{k+1}),
    \]
  where
    \[  
      U(s,t):=\int_s^t u(x)\dx,
    \]
  whenever $0\le s\le t\le \infty$. Furthermore, define
    \[
      \K-1 :=\{k\in\Z,\ k+1\in\K\}, \quad \K-2 :=\{k\in\Z,\ k+2\in\K\}.
    \] 
\end{df}

\begin{rem}
  In the setting of Theorem \ref{1}, admissibility of $(u,v)$ with respect to $(m,p)$ ensures that $\lim_{k\to-\infty} t_k = 0$ and $\lim_{k\to K} t_k = \infty$ (in case of $K=0$, $\lim_{k\to 0} t_k$ is defined as $t_0$). Hence, the identity 
    \[
      \intnn \sigma(t)\dt = \sumk \intdkmj \sigma(t)\dt
    \]
  holds for any nonnegative measurable function $\sigma$ defined on $(0,\infty)$.
\end{rem}

\begin{thm}\label{10}
  Let $m,p\in(0,\infty)$, let $(u,v)$ be an~admissible pair of weights with respect to $(m,p)$, let $\fii$ be the fundamental function of $\CL$ and assume that $\{\tk\}_{k\in\K}$ is a~discretizing sequence of $\CL$. Then, for every $k\in\K$ and $t\in\dkmj$, the estimates
    \begin{equation}\label{11}
      \int_0^{\tk} v(s)\ds \le \frac{2^{\pm+1}}{2^{\pm+1}-1} \intdkmj v(s)\ds
    \end{equation}
  and
    \begin{align}
      \fii^p(t) & \le \frac{2^{\frac{3p}m+3}}{2^{\pm+1}-1} \intdkmt v(s)\ds\ \Upm(\dkmd)\  +\ 2^{\pm+2} \intdkmd v(s)\Upm(s,\tkmj)\ds \label{12}\\
              & \quad +  \frac{3\cdot 2^{\frac{2p}m+1}}{2^{\pm+1}-1} \int_\dkmd v(s)\ds\ \Upm(\tkmj,t)  \nonumber
    \end{align}    
  hold true.
\end{thm}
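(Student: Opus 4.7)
Estimate \eqref{11} is immediate: writing $\int_0^{t_k} v = \int_0^{\tkmj} v + \intdkmj v$ and combining with \eqref{2} in the form $\int_0^{\tkmj} v \le 2^{-\pm-1}\int_0^{t_k} v$, one gets $(1-2^{-\pm-1})\int_0^{t_k} v \le \intdkmj v$, which rearranges to \eqref{11}.

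For \eqref{12}, my plan is a two-level decomposition at $\tkmj$ and $\tkmd$, together with a self-bounding argument driven by \eqref{3}. First I split
\[
  \fii^p(t) = \int_0^{\tkmj} v(s) U^{\pm}(s,t)\ds + \int_{\tkmj}^t v(s) U^{\pm}(s,t)\ds.
\]
For $s\le \tkmj$ I apply the elementary inequality $(a+b)^{\pm}\le 2^{\pm}(a^{\pm}+b^{\pm})$ to the splitting $U(s,t)=U(s,\tkmj)+U(\tkmj,t)$; for $s\in[\tkmj,t]$ I use $U(s,t)\le U(\tkmj,t)$. This produces
\[
  \fii^p(t) \le 2^{\pm}\fii^p(\tkmj) + 2^{\pm}\Upm(\tkmj,t)\int_0^{\tkmj} v + \Upm(\tkmj,t)\int_{\tkmj}^t v.
\]
Applying the same recipe one step back (at $\tkmd$) to $\fii^p(\tkmj)$ yields
\[
  \fii^p(\tkmj) \le 2^{\pm}\fii^p(\tkmd) + 2^{\pm}\Upm(\dkmd)\int_0^{\tkmd} v + \intdkmd v(s)\Upm(s,\tkmj)\ds.
\]

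The crucial observation is now \eqref{3}: since $\fii^p(\tkmd)\le 2^{-\pm-1}\fii^p(\tkmj)$, the term $2^{\pm}\fii^p(\tkmd)$ is bounded by $\tfrac12\fii^p(\tkmj)$ and can be absorbed into the left-hand side, giving an explicit bound
\[
  \fii^p(\tkmj) \le 2^{\pm+1}\Upm(\dkmd)\int_0^{\tkmd} v + 2\intdkmd v(s)\Upm(s,\tkmj)\ds.
\]
Plugging this back into the first bound and using \eqref{11} with $k$ replaced by $k-1$ and $k-2$ (so that $\int_0^{\tkmj} v$ and $\int_0^{\tkmd} v$ are replaced by $\intdkmd v$ and $\intdkmt v$, each up to the factor $2^{\pm+1}/(2^{\pm+1}-1)$) produces the first two terms of \eqref{12} with their stated constants. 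For the residual $\Upm(\tkmj,t)\int_{\tkmj}^t v$ I would invoke \eqref{11} at index $k$ to estimate $\int_{\tkmj}^t v\le \int_0^{t_k} v$ and then combine it with the $2^{\pm}\Upm(\tkmj,t)\int_0^{\tkmj} v$ piece using the doubling relation \eqref{2} between $\int_0^{\tkmj} v$ and $\int_0^{t_k} v$. The main obstacle will be the careful bookkeeping of constants through these iterated splits, and in particular arranging the combination so that the residual collapses precisely into $\intdkmd v\cdot\Upm(\tkmj,t)$ with the stated factor $3\cdot 2^{\frac{2p}{m}+1}/(2^{\pm+1}-1)$ rather than producing a stray $\intdkmj v$ term.
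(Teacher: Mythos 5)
Your proof of \eqref{11} is exactly the paper's, and your second-level split of $\fii^p(\tkmj)$ at $\tkmd$ with the absorption driven by \eqref{3} is also exactly the paper's argument; the constants you obtain for the first two terms of \eqref{12} are in fact slightly better than the stated ones. The gap is in the residual term, and it is not mere bookkeeping. After your first split you are left with $2^\pm\,\Upm(\tkmj,t)\int_0^{\tkmj}v + \Upm(\tkmj,t)\int_{\tkmj}^t v$, and you propose to handle this using \eqref{11} at index $k$ together with ``the doubling relation \eqref{2}''. But \eqref{2} goes the wrong way: it bounds $\int_0^{\tkmj}v$ from above by $2^{-\pm-1}\int_0^{\tk}v$, i.e.\ it converts $\tkmj$-integrals into $\tk$-integrals, so this route can only yield a bound of the form $\mathrm{const}\cdot\intdkmj v\ \Upm(\tkmj,t)$, whereas \eqref{12} demands $\intdkmd v\ \Upm(\tkmj,t)$. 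No inequality $\intdkmj v\lesssim\intdkmd v$ is available: \eqref{2} is one-sided, and $v$ may carry an arbitrarily large mass on $\dkmj$, which the right-hand side of \eqref{12} does not see at all (it contains no integral of $v$ over $\dkmj$). Indeed, one can build weights and points satisfying \eqref{2} and \eqref{3} but neither \eqref{4} nor \eqref{5} at the index $k$ for which \eqref{12} fails; hence no argument using only \eqref{2}, \eqref{3} and \eqref{11}, as yours does, can close this step.

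The missing ingredient is the dichotomy built into the discretizing sequence: every $k$ belongs to $\Kj$ or to $\Kd$, and the corresponding \emph{equality} must be used. If $k\in\Kd$, the residual is avoided altogether: by monotonicity and \eqref{5}, $\fii^p(t)\le\fii^p(\tk)=2^{\pm+1}\fii^p(\tkmj)$, and then only your second-level estimate of $\fii^p(\tkmj)$ is needed. If $k\in\Kj$, the equality \eqref{4} gives $\int_0^{\tk}v=2^{\pm+1}\int_0^{\tkmj}v$, which converts the residual $\Upm(\tkmj,t)\int_{\tkmj}^t v\le\Upm(\tkmj,t)\int_0^{\tk}v$ into $2^{\pm+1}\Upm(\tkmj,t)\int_0^{\tkmj}v$; combining it with the $2^\pm\Upm(\tkmj,t)\int_0^{\tkmj}v$ piece and applying \eqref{11} at index $k-1$ (so that $\int_0^{\tkmj}v\le\frac{2^{\pm+1}}{2^{\pm+1}-1}\intdkmd v$) produces precisely the third term of \eqref{12} with the factor $3\cdot 2^{\frac{2p}m+1}/(2^{\pm+1}-1)$. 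Add this $\Kj$/$\Kd$ case analysis and your argument becomes the paper's proof.
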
  

\begin{proof}
  Let $k\in\K$. By \eqref{2}, one gets
    \[  
      \intdkmj v(s)\ds = \int_0^\tk v(s)\ds - \int_0^{\tkmj} v(s) \ds \ge \lt 1-\frac1{2^{\pm+1}}\rt \int_0^\tk v(s)\ds,
    \]
  which proves \eqref{11}. 
  
  Now let $t\in\dkmj$. If $k\in\Kd$, then \eqref{3} implies
    \[
      \fii^p(t) \le \int_0^{t_{k}} v(s)\Upm(s,t_{k})\ds = 2^{\pm+1} \int_0^\tkmj v(s)\Upm(s,\tkmj)\ds.
    \]
  If $k\in\Kj$, one gets the following estimates:
    \begin{align}
      \fii^p(t) & = \int_0^\tkmj v(s)\Upm(s,t)\ds + \int_{\tkmj}^t v(s)\Upm(s,t)\ds \nonumber \\
              & \le 2^\pm \int_0^\tkmj v(s)\Upm(s,\tkmj)\ds + 2^\pm \int_0^\tkmj v(s)\ds\ \Upm(\tkmj,t) + \int_0^{\tk} v(s)\ds\ \Upm(\tkmj,t) \label{15}\\
              & = 2^\pm \int_0^\tkmj v(s)\Upm(s,\tkmj)\ds + 3\cdot 2^\pm \int_0^\tkmj v(s)\ds\ \Upm(\tkmj,t) \label{16} \\
              & \le 2^\pm \int_0^\tkmj v(s)\Upm(s,\tkmj)\ds + \frac{3\cdot 2^{\frac{2p}m+1}}{2^{\pm+1}-1} \int_\dkmd v(s)\ds\ \Upm(\tkmj,t). \label{17} 
    \end{align}
  In here, estimate \eqref{15} follows from the inequality $(a+b)^\pm\le 2^{\pm}\lt a^\pm+b^\pm\rt$ valid for $a,b\ge0$, identity \eqref{16} follows from \eqref{4}, and estimate \eqref{17} follows from the inequality \eqref{11} (in which $k$ is replaced by $k-1$). Combining the obtained estimates, one gets
    \begin{equation}\label{14}
      \fii^p(t) \le 2^{\pm+1} \int_0^\tkmj v(s)\Upm(s,\tkmj)\ds + \frac{3\cdot 2^{\frac{2p}m+1}}{2^{\pm+1}-1} \int_\dkmd v(s)\ds\ \Upm(\tkmj,t).
    \end{equation}
  Next, one has
    \begin{align*}
      & \int_0^\tkmj v(s)\Upm(s,\tkmj)\ds \\
        & \quad \le 2^\pm \int_0^\tkmd v(s)\Upm(s,\tkmd)\ds + 2^\pm \int_0^\tkmd v(s)\ds\ \Upm(\dkmd) + \intdkmd v(s)\Upm(s,\tkmj)\ds \nonumber\\
        & \quad \le \frac12 \int_0^\tkmj v(s)\Upm(s,\tkmj)\ds + \frac{2^{\frac{2p}m+1}}{2^{\pm+1}-1} \intdkmt v(s)\ds\ \Upm(\dkmd) + \intdkmd v(s)\Upm(s,\tkmj)\ds,
    \end{align*}
  where the second inequality follows from \eqref{3} and \eqref{11} (with $k$ replaced by $k-1$ and $k-2$, respectively). Therefore,
    \[
      \int_0^\tkmj v(s)\Upm(s,\tkmj)\ds \le  \frac{2^{\frac{2p}m+2}}{2^{\pm+1}-1} \intdkmt v(s)\ds\ \Upm(\dkmd) + 2 \intdkmd v(s)\Upm(s,\tkmj)\ds.
    \]
  Together with \eqref{14}, this gives the result \eqref{12}.
\end{proof}

The following theorem is the main result concerning the discretization of the Copson-Lorentz functional $\|\cdot\|_{\CL}$. It is stated in a~general form for $m\in(0,\infty)$ although the sole equivalence \eqref{19} with $m=1$ is sufficient for the purposes of this paper. 

\begin{thm}\label{18}
  Let $m,p\in(0,\infty)$, let $(u,v)$ be an~admissible pair of weights with respect to $(m,p)$, let $\fii$ be the fundamental function of $\CL$ and $\{\tk\}_{k\in\K}$ be a~discretizing sequence of $\CL$. Then 
    \begin{align}
      & \intnn v(t) \lt \inttn u(s) \lt \intsn h(y)\dy \rt^m \!\! \ds \rt^\pm \!\! \dt \approx \sumk \lt\ \intdkmj \fii^m(y) \lt \int_y^\tk h(x)\dx \rt^{m-1} \hspace{-6pt} h(y)\dy \rt^\pm  \label{19} \\
      & \quad \approx \sumk \ \fii^p(\tkmj) \lt\ \intdkmj \! h(y)\dy \rt^p + \sumk \lt\ \intdkmj \fii^{m-1}(y) \fii'(y) \lt \int_y^\tk h(x)\dx \rt^m \hspace{-4pt} \dy \rt^\pm \label{117}
    \end{align}
  for all $h\in\MM$, with the convention \eqref{116} applied whenever needed.
\end{thm}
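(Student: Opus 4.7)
I would first establish the easier equivalence $\eqref{19} \approx \eqref{117}$ by integration by parts on each interval $\dkmj = [\tkmj, \tk]$. Since $h(y) \lt \int_y^\tk h \rt^{m-1} = -\frac{1}{m} \frac{d}{dy} \lt \int_y^\tk h \rt^m$ and the boundary term at $y = \tk$ vanishes, one obtains
\[
  \intdkmj \fii^m(y) \lt \int_y^\tk h \rt^{m-1} h(y) \dy = \tfrac{1}{m} \fii^m(\tkmj) \lt \intdkmj h \rt^m + \intdkmj \fii^{m-1}(y) \fii'(y) \lt \int_y^\tk h \rt^m \dy.
\]
Raising this to the power $\pm > 0$, applying the elementary equivalence $(A+B)^\alpha \approx A^\alpha + B^\alpha$ valid for $A, B \geq 0$ and $\alpha > 0$, and summing over $k \in \K$ converts the right-hand side of \eqref{19} into that of \eqref{117}. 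It therefore suffices to prove \eqref{19}.

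For the upper bound $I \lesssim \mathrm{RHS}$ of \eqref{19}, set $H(s) := \intsn h$ and split $I = \sumk \intdkmj v(t) \lt \inttn u(s) H(s)^m \ds \rt^\pm \dt$. On each $\dkmj$, decompose the inner integral along the discretization, $\inttn u(s) H(s)^m \ds = \int_t^{\tk} u(s) H(s)^m \ds + \sum_{j \geq k} \intdj u(s) H(s)^m \ds$, and further split $H(s) = \int_s^{t_{j+1}} h + H(t_{j+1})$ for $s \in \dj$, iterating. Applying $(a+b)^m \approx a^m + b^m$ under the inner $s$-integration and $(a+b)^\pm \approx a^\pm + b^\pm$ after integrating in $t$ produces a finite collection of nested sums indexed by pairs $(k, j)$. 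The shifted-index tails are then collapsed via Proposition \ref{20} (and \ref{21} where appropriate), exploiting the geometric growth \eqref{3} of $\{\fii^p(\tk)\}_{k \in \K}$ together with \eqref{2} for the $\int_0^{\tk} v$-factors. The resulting bound matches the $\mathrm{RHS}$ of \eqref{117}.

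For the lower bound, fix $k \in \K$ and use the pointwise minoration
\[
  \inttn u(s) H(s)^m \ds \geq \int_{\tkmj}^{\tk} u(s) \lt \int_s^{\tk} h \rt^m \ds, \qquad t \in (0, \tkmj),
\]
together with the Fubini identity $\int_{\tkmj}^{\tk} u(s) \lt \int_s^{\tk} h \rt^m \ds = m \int_{\tkmj}^{\tk} h(y) \lt \int_y^{\tk} h \rt^{m-1} U(\tkmj, y) \dy$. Restricting the outer integration to $\dkmd$ (which, by \eqref{2}, carries a portion of $\int_0^{\tkmj} v$ comparable to the whole) and using \eqref{3} and \eqref{11} to relate the resulting $v$- and $U^{p/m}$-factors to $\fii^p(\tkmj)$ yields a lower bound by a constant multiple of the $k$-th summand of \eqref{117}. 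Because the integration windows $\dkmd$ for distinct $k \in \K$ are pairwise disjoint, summing over $k$ produces $\mathrm{RHS}$ of \eqref{117} $\lesssim I$.

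The main obstacle is the upper bound. The various splittings generate nested sums whose collapse via Proposition \ref{20}/\ref{21} requires careful matching of the relevant geometric sequence (whether $\{\fii^p(\tk)\}$ or $\{\int_0^\tk v\}$) with the summation direction ($j \geq k$ or $j \leq k$). A further subtlety is that $\pm = p/m$ may lie on either side of $1$, so the direction of $(a+b)^\pm$ versus $a^\pm + b^\pm$ changes with the regime; the cases $p \geq m$ and $p < m$ therefore need separate bookkeeping at intermediate steps, even though the final estimate is uniform. One must also invoke the convention \eqref{116} throughout to keep expressions such as $\fii'$ and $U^{p/m-1}$ well-defined when $u$ vanishes on subintervals and $p \leq m$.
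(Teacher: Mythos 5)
Your first step (per-interval integration by parts, then raising to the power $\frac pm$ and using $(A+B)^{\alpha}\approx A^{\alpha}+B^{\alpha}$) is exactly how the paper passes between \eqref{19} and \eqref{117}, and it is sound once one notes that $\fii^p$ is locally absolutely continuous, being an integral of a locally integrable function. Your upper-bound plan is only a sketch, but it is the same in spirit as the paper's decomposition of the left-hand side into three blocks and their estimation via Propositions \ref{20}--\ref{21} with \eqref{2}, \eqref{3}; what the sketch hides is the step converting factors like $\int_0^{\tk}v$ or $\fii^p(\tk)$ into $\fii^m(y)$ under the inner integral, which in the paper requires the dichotomy $k\in\Kj$ versus $k\in\Kd$ through \eqref{4}, \eqref{5} and a Fubini argument; this is detail rather than principle.

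The genuine gap is in the lower bound (the direction ``discrete sum $\lesssim$ integral''). You claim that for each fixed $k$, restricting $t$ to $\dkmd$ and using the single minoration $\inttn u(s)\lt\intsn h\rt^m\ds\ge\int_{\tkmj}^{\tk}u(s)\lt\int_s^{\tk}h\rt^m\ds$ already dominates the $k$-th summand of \eqref{117}. It does not: this minorant only produces the factor $U(\tkmj,y)$, and there is no estimate $\fii^m(y)\lesssim\lt\int_{\dkmd}v\rt^{\frac mp}U(\tkmj,y)$ on $\dkmj$, since $U(\tkmj,y)\to0$ as $y\to\tkmj+$ while $\fii(\tkmj)>0$ by admissibility. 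Concretely, with $h=\eps^{-1}\chi_{[\tkmj,\tkmj+\eps]}$ your bound for the index $k$ is at most $\int_{\dkmd}v\cdot U^{\frac pm}(\tkmj,\tkmj+\eps)\to0$ as $\eps\to0$, whereas the $k$-th summand of \eqref{117} is at least $\fii^p(\tkmj)\lt\intdkmj h\rt^p=\fii^p(\tkmj)>0$; disjointness of the windows $\dkmd$ cannot repair this, because no other index is assigned that mass in your scheme. This is precisely the role of Theorem \ref{10}: estimate \eqref{12} splits $\fii^p$ on $\dkmj$ into three pieces carried by $\dkmt$, $\dkmd$ and $\dkmj$, and the paper's converse argument (\eqref{27}--\eqref{28}) accordingly uses three different $(t,s)$-windows --- in particular the term you lose is recovered from $t\in\dkmt$, $s\in\dkmd$, where $\intsn h\ge\intdkmj h$ --- together with the bounded overlap of these windows when summing in $k$. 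Your proposal needs this multi-window step (or an equivalent substitute for \eqref{12}) to close.
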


\begin{proof}
  Let $h\in\MM$. Then
    \Bdef{1}\Bdef{2}\Bdef{3}
    \begin{align*}
      & \intnn v(t) \lt \inttn u(s) \lt \intsn h(y)\dy \rt^m \!\! \ds \rt^\pm \!\! \dt \\
      & = \sumk\, \intdkmj v(t) \lt \inttn u(s) \lt \intsn h(y)\dy \rt^m \!\! \ds \rt^\pm \!\! \dt \\
      & \approx \sumk\, \intdkmj v(t) \lt \int_t^\tk u(s) \lt \int_s^\infty h(y)\dy \rt^m \!\! \ds \rt^\pm \!\! \dt + \sumkmj\, \intdkmj v(t)\dt \lt \int_\tk^\infty u(s)\lt \int_s^\infty h(y)\dy \rt^m\!\!\ds\rt^\pm \\
      & \approx \sumk\, \intdkmj v(t) \lt \int_t^\tk u(s) \lt \int_s^\tk h(y)\dy \rt^m \!\! \ds \rt^\pm \!\! \dt + \sumkmj\, \intdkmj v(t) \Upm(t,\tk) \dt \lt \int_\tk^\infty h(y)\dy \rt^p \\ 
      & \quad + \sumkmj\, \intdkmj v(t)\dt \lt \int_\tk^\infty u(s)\lt \int_s^\infty h(y)\dy \rt^m\!\!\ds\rt^\pm \\
      & =: \B{1}+\B{2}+\B{3}.
    \end{align*}
  Since the function $h$ is locally integrable and the condition \eqref{116} is in force, the relation 
    \[
      \lt \int_a^b h(y)\dy \rt^m =\ m \int_a^b \lt \int_y^b h(x)\dx \rt^{m-1} \!\!\! h(y) \dy
    \]
  may be used for any $0\le a\le b\le\infty$. Thus, we may write
    \begin{align}
      \B{1} & \approx  \sumk\, \intdkmj v(t) \lt \int_t^\tk u(s) \int_s^\tk \lt \int_y^\tk h(x)\dx \rt^{m-1} \!\!\! h(y) \dy \ds \rt^\pm \!\! \dt \nonumber\\
            & =        \sumkj\, \intdkmj v(t) \lt \int_t^\tk u(s) \int_s^\tk \lt \int_y^\tk h(x)\dx \rt^{m-1} \!\!\! h(y) \dy \ds \rt^\pm \!\! \dt \nonumber\\
            & \quad +  \sumkd\, \intdkmj v(t) \lt \int_t^\tk u(s) \int_s^\tk \lt \int_y^\tk h(x)\dx \rt^{m-1} \!\!\! h(y) \dy \ds \rt^\pm \!\! \dt \nonumber\\
            & \le      \sumkj \int_0^\tk v(t)\dt  \lt\ \intdkmj u(s) \int_s^\tk \lt \int_y^\tk h(x)\dx \rt^{m-1} \!\!\! h(y) \dy \ds \rt^\pm  \nonumber\\
            & \quad +  \sumkd\, \fii^p(\tk) \lt\ \intdkmj \lt \int_y^\tk h(x)\dx \rt^{m-1} \!\!\! h(y) \dy  \rt^\pm \nonumber\\
            & \lesssim \sumkj \int_0^\tkmj v(t)\dt  \lt\ \intdkmj U(\tkmj,y) \lt \int_y^\tk h(x)\dx \rt^{m-1} \!\!\! h(y) \dy \rt^\pm \label{22}\\
            & \quad +  \sumkd\, \fii^p(\tkmj) \lt\ \intdkmj \lt \int_y^\tk h(x)\dx \rt^{m-1} \!\!\! h(y) \dy \rt^\pm  \nonumber\\
            & \lesssim \sumk\, \lt\ \intdkmj \fii^m(y) \lt \int_y^\tk h(x)\dx \rt^{m-1} \!\!\! h(y) \dy \rt^\pm. \nonumber
    \end{align}
  To get \eqref{22}, properties \eqref{4}, \eqref{5} and the Fubini theorem were used. Next, the term $\B{2}$ is estimated as follows.
    \begin{align}
      \B{2} & \le      \sumkmj\, \fii^p(\tk) \lt \int_\tk^\infty h(y)\dy \rt^p \nonumber\\
            & \lesssim \sumkmj\, \fii^p(\tk) \lt\ \intdk h(y)\dy \rt^p\label{23}\\
            & \approx  \sumkmj \fii^p(\tk) \lt\ \intdk \lt \int_y^\tkpj h(x)\dx \rt^{m-1} \!\!\! h(y) \dy \rt^\pm  \nonumber\\
            & \le      \sumkmj \lt\ \intdk \fii^m(y) \lt \int_y^\tkpj h(x)\dx \rt^{m-1} \!\!\! h(y) \dy \rt^\pm \nonumber \\
            & =        \sumk\, \lt\ \intdkmj \fii^m(y) \lt \int_y^\tk h(x)\dx \rt^{m-1} \!\!\! h(y) \dy \rt^\pm. \nonumber
    \end{align}
  In here, inequality \eqref{23} follows from Proposition \ref{20} and \eqref{3}. Concerning $\B{3}$, one has
    \begin{align}
      \B{3} & \lesssim \sumkmj\, \intdkmj v(t)\dt \lt\ \intdk u(s)\lt \int_s^\infty h(y)\dy \rt^m\!\!\ds\rt^\pm \label{24}\\
            & \lesssim \sumkmj\, \intdkmj v(t)\dt \lt\ \intdk u(s)\lt \int_s^\tkpj h(y)\dy \rt^m\!\!\ds\rt^\pm \nonumber\\
            & \quad +  \sumkmd\, \intdkmj v(t)\dt\ \Upm(\dk) \lt\ \int_{\tkpj}^\infty h(y)\dy \rt^p \nonumber\\
            & \lesssim \sumkmj\, \intdkmj v(t)\dt \lt \intdk u(s)\lt \int_s^\tkpj h(y)\dy \rt^m\!\!\ds\rt^\pm + \sumkmd\, \fii^p(\tkpj) \lt\, \int_{\tkpj}^\infty h(y)\dy \rt^p \nonumber\\
            & \lesssim \sumkmj\, \intdkmj v(t)\dt \lt\ \intdk u(s) \int_s^\tkpj \lt \int_y^\tkpj h(x)\dx \rt^{m-1}\!\!\! h(y)\dy \ds\rt^\pm \label{25}\\
            & \quad +  \sumkmd\, \fii^p(\tkpj) \lt\ \intdkpj h(y)\dy \rt^p \nonumber\\
            & \lesssim \sumkmj\, \intdkmj v(t)\dt \lt\ \intdk U(\tk,y) \lt \int_y^\tkpj h(x)\dx \rt^{m-1}\!\!\! h(y)\dy \rt^\pm \label{26}\\
            & \quad +  \sumkmd\, \fii^p(\tkpj) \lt\ \intdkpj \lt \int_y^{t_{k+2}} h(x)\dx\rt^{m-1} \!\!\! h(y)\dy \rt^\pm \nonumber\\
            & \lesssim \sumkmj \lt \intdk \fii^m(y) \lt \int_y^\tkpj h(x)\dx \rt^{m-1}\!\!\! h(y)\dy \rt^\pm \nonumber\\
            & \quad  + \sumkmd\, \lt\, \intdkpj \fii^m(y) \lt \int_y^{t_{k+2}} h(x)\dx\rt^{m-1} \!\!\! h(y)\dy \rt^\pm \nonumber\\
            & \approx  \sumk\, \lt\, \intdkmj \fii^m(y) \lt \int_y^\tk h(x)\dx \rt^{m-1} \!\!\! h(y) \dy \rt^\pm. \nonumber
    \end{align}
  Inequality \eqref{24} follows from Proposition \ref{20} and \eqref{2}, inequality \eqref{25} from Proposition \ref{20} and \eqref{3}. The Fubini theorem implies \eqref{26}. 
  
  The obtained estimates of $\B{1}$, $\B{2}$ and $\B{3}$ together yield the ``$\lesssim$'' inequality in \eqref{19}. Next step is to prove the converse inequality. It is done in the following way.
    \begin{align}
      & \sumk \lt\ \intdkmj \fii^m(y) \lt \int_y^\tk h(x)\dx \rt^{m-1} \hspace{-6pt} h(y)\dy \rt^\pm \nonumber\\
      & \lesssim \sumk \ \intdkmt v(t)\dt\ \Upm(\dkmd) \lt\ \intdkmj \lt \int_y^\tk h(x)\dx \rt^{m-1} \hspace{-6pt} h(y)\dy \rt^\pm \label{27}\\
      & \quad +  \sumk \ \intdkmd v(t) \Upm(t,\tkmj)\dt \lt\ \intdkmj \lt \int_y^\tk h(x)\dx \rt^{m-1} \hspace{-6pt} h(y)\dy \rt^\pm \nonumber \\
      & \quad +  \sumk \ \intdkmd v(t)\dt \lt\ \intdkmj U(\tkmj,y) \lt \int_y^\tk h(x)\dx \rt^{m-1} \hspace{-6pt} h(y)\dy \rt^\pm \nonumber\\
      & \lesssim \sumk \ \intdkmt v(t)\dt\ \Upm(\dkmd) \lt\ \intdkmj h(y)\dy \rt^p \label{28}\\
      & \quad +  \sumk \ \intdkmd v(t) \Upm(t,\tkmj)\dt \lt\ \intdkmj h(y)\dy \rt^p \nonumber \\
      & \quad +  \sumk \ \intdkmd v(t)\dt \lt\ \intdkmj u(s) \lt \int_s^\tk h(y)\dy \rt^{m} \ds \rt^\pm \nonumber\\
      & \lesssim \sumk \ \intdkmt v(t) \lt \int_t^\infty u(s) \lt \int_s^\infty h(y)\dy \rt^m \!\!\! \ds \rt^\pm \!\!\! \dt \nonumber\\
      & \quad +  \sumk \ \intdkmd v(t) \lt \int_t^\infty u(s) \lt \int_s^\infty h(y)\dy \rt^m \!\!\! \ds \rt^\pm \!\!\! \dt \nonumber\\
      & \lesssim \intnn v(t) \lt \inttn u(s) \lt \intsn h(y)\dy \rt^m \!\! \ds \rt^\pm \!\! \dt. \nonumber
    \end{align}
  Inequality \eqref{27} follows from \eqref{12}. The Fubini theorem is used to get the estimate of the third summand in \eqref{28}. The ``$\gtrsim$'' inequality in \eqref{19} is thus verified.
  
  Equivalence \eqref{117} is obtained by integration by parts, considering the a.e.-differentiability of $\fii$ (see Remark \ref{6}(i)). 
\end{proof}

The lemma below is a~standard auxillary result used in the theory of rearrangement-invariant spaces.

\begin{lem}\label{29}
  Let $m,p\in(0,\infty)$ and let $(u,v)$ be an~admissible pair of weights with respect to $(m,p)$. Then \eqref{30} holds for all $f\in\M$ if and only if the inequality
    \begin{equation}\label{31}
      \lt \intnn w(t) \lt \int_t^\infty h(y)\dy \rt^q \dt \rt^\jq \le C \lt \intnn v(t) \lt \inttn u(s) \lt \int_s^\infty h(y)\dy \rt^m \!\ds \rt^\pm \!\dt \rt^\jp
    \end{equation}
  holds for all $h\in\MM$.
\end{lem}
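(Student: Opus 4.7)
Both sides of \eqref{30} and \eqref{31} depend only on nonincreasing nonnegative functions on $(0,\infty)$: in \eqref{30} the function $f$ enters solely through $\f$, while in \eqref{31} the map $s\mapsto\intsn h(y)\dy$ is automatically nonincreasing. The plan is to match the classes $\{\f:f\in\M\}$ and $\{t\mapsto\inttn h(y)\dy : h\in\MM\}$ up to approximation from below, so that the two inequalities become essentially a change of representation of one another.

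For \eqref{30}$\Rightarrow$\eqref{31}, I would pick $h\in\MM$ and set $\psi(t):=\inttn h(y)\dy$. If $\psi$ is finite on $(0,\infty)$, then $\psi$ is nonincreasing, continuous, and vanishes at infinity, so classical rearrangement theory provides some $f\in\M$ (for instance a radially symmetric decreasing function on $\R^n$) with $\f=\psi$; substituting this $f$ into \eqref{30} gives exactly \eqref{31}. The remaining case $\psi(t)=\infty$ on an initial interval is handled by truncating $h$ to $h\chi_{[1/M,M]}$ and letting $M\to\infty$, with monotone convergence on both sides.

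For the converse, given $f\in\M$ I would approximate $\f$ from below by absolutely continuous functions of the required form. A convenient choice is $\psi_\eps(t):=\eps^{-1}\int_t^{t+\eps}\f(s)\ds$, which is continuous, nonincreasing, satisfies $\psi_\eps\le\f$, and $\psi_\eps\nearrow\f$ a.e.~as $\eps\to 0^+$. When $\f(\infty)=0$, absolute continuity yields the representation $\psi_\eps(t)=\inttn h_\eps(y)\dy$ with $h_\eps:=-\psi_\eps'\in\MM$; picking $f_\eps\in\M$ whose nonincreasing rearrangement is $\psi_\eps$ and applying \eqref{31} to $h_\eps$, then sending $\eps\to 0^+$ and invoking monotone convergence on both sides of \eqref{30}, I recover the desired estimate.

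The main obstacle I anticipate is the boundary behaviour, i.e.~the case $\f(\infty)>0$ together with the case when $\inttn h(y)\dy=\infty$ for some $t$; neither is directly covered by the representations above. These are dispatched by standard truncations ($f\mapsto f\chi_{\{|f|>\lambda\}}$ with $\lambda\to 0^+$ in the first case, $h\mapsto h\chi_{[1/M,M]}$ with $M\to\infty$ in the second) combined with monotone convergence. The admissibility assumption on $(u,v)$ guarantees $0<\fii(t)<\infty$ for all $t\in(0,\infty)$, so no degenerate behaviour arises in the intermediate $\CL$-quantities; the obstacle is thus more a matter of careful bookkeeping than of genuine difficulty.
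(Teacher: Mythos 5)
Your overall plan is the same as the paper's: the forward implication is the paper's ``obvious'' direction (realize $t\mapsto\int_t^\infty h(y)\dy$ as a nonincreasing rearrangement, with a harmless truncation of $h$ when the integral is infinite), and the converse is proved by approximating $\f$ from below by functions of the form $\int_t^\infty h(y)\dy$, comparing right-hand sides by $\int_s^\infty h\le\f(s)$, and using monotone convergence on the left. The paper obtains the approximating family wholesale from Sinnamon's Lemma~1.2 in \cite{Si}, which supplies $h_n\in\MM$ with $\int_t^\infty h_n(y)\dy\uparrow\f(t)$ a.e.; you replace this by the mollification $\psi_\eps$, which indeed works whenever $\f(\infty)=0$.

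The genuine gap is your treatment of the case $c:=\lim_{t\to\infty}\f(t)>0$. The truncation $f\mapsto f\chi_{\{|f|>\lambda\}}$ with $\lambda\to0^+$ does not reduce this case to the previous one: for $\lambda<c$ the set $\{|f|>\lambda\}$ has infinite measure and one checks directly that $\bigl(f\chi_{\{|f|>\lambda\}}\bigr)^*=\f$, so the truncation changes nothing and the non-representable tail is still there; for $\lambda\ge c$ one gets $\bigl(f\chi_{\{|f|>\lambda\}}\bigr)^*=\f\chi_{[0,\,|\{|f|>\lambda\}|)}$, and if the value $c$ is attained on a set of infinite measure (say $\f\equiv c$ on $[t_0,\infty)$ with $t_0=|\{\f>c\}|<\infty$), all of these are supported in $[0,t_0)$. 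Hence everything your scheme produces is dominated by $\f\chi_{[0,t_0)}$, and monotone convergence recovers only $\int_0^{t_0}w(\f)^q$, never the tail contribution $c^q\int_{t_0}^\infty w$ --- which may be the dominant or even the only nonzero part of the left-hand side, while the right-hand side of \eqref{30} remains finite (e.g.\ when $\int_t^\infty u(s)\ds<\infty$ and $v$ decays suitably). So this is not a bookkeeping issue: a positive constant at infinity can only be reached by approximants whose mass escapes to infinity, for instance by adding to $h_\eps$ a term of the form $c\,n^{-1}\chi_{(t_0,t_0+n)}$; this is precisely what Sinnamon's lemma, as invoked in the paper, provides, after which the argument closes as you describe. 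A minor further remark: the auxiliary functions $f_\eps$ in your converse are unnecessary, since once $\int_\cdot^\infty h_\eps\le\f$ the right-hand sides of \eqref{31} and \eqref{30} compare directly.
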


\begin{proof}
  The ``only if'' part is obvious. Let us prove the ``if'' part. Suppose that \eqref{31} holds for all $h\in\MM$. Let $f\in\M$. Then, by \cite[Lemma 1.2]{Si}, there exists a~sequence $\{h_n\}_{n\in\N}$ of functions from $\MM$ such that $\inttn h_n(y)\dy\uparrow \f(t)$ as $n\to\infty$ for a.e.~$t\in(0,\infty)$. For any $n\in\N$, \eqref{31} implies
    \begin{align*}
      \lt \intnn w(t) \lt \int_t^\infty h_n(y)\dy \rt^q \dt \rt^\jq & \le C \lt \intnn v(t) \lt \inttn u(s) \lt \int_s^\infty h_n(y)\dy \rt^m \!\! \ds \rt^\pm \!\! \dt \rt^\jp \\
                                                                    & \le C \lt \intnn v(t) \lt \inttn u(s) (\f(s))^m \ds \rt^\pm \!\! \dt \rt^\jp.
    \end{align*}
  The monotone convergence theorem then yields \eqref{30}. Since $f\in\M$ was chosen arbitrarily, \eqref{30} holds for all $f\in\M$.
\end{proof}

Since $(f^m)^*=(\f)^m$ is satisfied pointwise for every $f\in\M$ and $m>0$, the following proposition obviously holds true. 

\begin{prop}\label{33}
  Let $m,p\in(0,\infty)$ and let $(u,v)$ be an~admissible pair of weights with respect to $(m,p)$. Then inequality \eqref{30} holds with a~$C>0$ for all $f\in\M$ if and only if the inequality
    \begin{equation}  
      \lt \intnn w(t) (\f(t))^{\frac qm} \dt \rt^{\frac mq} \le C^m \lt \intnn v(t) \lt \inttn u(s) \f(s) \ds \rt^\pm \dt \rt^\mp
    \end{equation}
  holds for all $f\in\M$.
\end{prop}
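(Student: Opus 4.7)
The plan is a direct substitution driven by the pointwise identity $(|f|^m)^*(t) = (\f(t))^m$, which holds on $(0,\infty)$ for every $f\in\M$ and $m>0$ and is explicitly flagged in the paragraph preceding the statement. The idea is that raising \eqref{30} to the $m$-th power already produces an inequality structurally identical to the one claimed, except with $\f^q$ and $\f^m$ in the integrands instead of $\f^{q/m}$ and $\f$; composition with the bijection $f\mapsto|f|^m$ on the cone of nonnegative measurable functions (whose inverse is $f\mapsto|f|^{1/m}$) then interchanges the two formulations.

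Concretely, I would start by noting that \eqref{30}, applied to a generic $g\in\M$ and raised to the $m$-th power, reads
\[
  \lt\intnn w(t)(\g(t))^q\dt\rt^{\frac mq} \le C^m \lt\intnn v(t)\lt\inttn u(s)(\g(s))^m\ds\rt^\pm\dt\rt^\mp.
\]
For the ``only if'' direction, I would fix an arbitrary $f\in\M$, apply the above to $g:=|f|^{1/m}\in\M$, and use the identity $\g=(\f)^{1/m}$ on $(0,\infty)$ to convert $(\g)^q$ into $(\f)^{\frac qm}$ and $(\g)^m$ into $\f$; the resulting inequality is exactly the one stated in the proposition. The ``if'' direction is symmetric: given $f\in\M$, apply the proposition's inequality to $g:=|f|^m\in\M$, invoke $\g=(\f)^m$ to recover the displayed form above, and then extract an $m$-th root to land back at \eqref{30}.

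The only formal check is that $|f|^{1/m}$ and $|f|^m$ lie in $\M$ whenever $f$ does, which is immediate since $\M$ is merely the cone of real-valued measurable functions on $\R^n$; this is the reason the author labels the statement as obviously true, and no genuine obstacle is expected.
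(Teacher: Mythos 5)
Your argument is correct and is precisely the one the paper intends: the paper dispenses with a written proof by noting the pointwise identity $(f^m)^* = (\f)^m$, and your substitutions $g=|f|^{1/m}$ and $g=|f|^m$ simply make that reduction explicit. Nothing is missing.
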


\section{Embeddings $C\!L\hra \Lambda$}

In this section, the focus is laid on inequality \eqref{30}, whose validity for all $f\in\M$ corresponds to the existence of the embedding $\CL\hra\Lambda^q(w)$. In the light of Proposition \ref{33}, to study inequality $\eqref{30}$ it suffices to consider its rescaled version
  \begin{equation}\label{34}  
      \lt \intnn w(t) (\f(t))^q \dt \rt^\jq \le C \lt \intnn v(t) \lt \inttn u(s) \f(s) \ds \rt^p \dt \rt^\jp.
  \end{equation}
As before, convention \eqref{116} is used in all what follows.

\begin{thm}\label{32}
  Let $p,q\in(0,\infty)$, let $(u,v)$ be an~admissible pair of weights with respect to $(1,p)$ and let $w$ be a weight. 

  {\rm(i)}
        Let $1\le q<\infty$ and $0<p\le q$. Then \eqref{34} holds for all $f\in\M$ with a~constant $C>0$ independent of $f$ if and only if 
          \[
            A_1 := \sup_{t\in(0,\infty)} \lt \int_0^t w(s)\ds \rt^\jq \lt \int_0^t v(s) U^p(s,t) \ds \rt^\mjp  <\infty.
          \]
        Moreover, the optimal constant $C$ in \eqref{34} satisfies $C\approx A_1$.
  
  {\rm(ii)}
        Let $1\le q<p<\infty$ and $r:=\frac{pq}{p-q}$. Then \eqref{34} holds for all $f\in\M$ with a~constant $C>0$ independent of $f$ if and only if
          \[ 
            A_2 := \lt \intnn v(t) \sup_{y\in(t,\infty)} \frac{\Up(t,y) \lt \int_0^y w(s)\ds \rt^\rq }{ \lt \int_0^y v(x)\Up(x,y)\dx \rt^{\frac rq}} \dt \rt^\jr < \infty.
          \]
        Moreover, the optimal constant $C$ in \eqref{34} satisfies $C\approx A_2$.
  
  {\rm(iii)}
        Let $0<p\le q<1$. Then \eqref{34} holds for all $f\in\M$ with a~constant $C>0$ independent of $f$ if and only if 
          \[ 
            A_3 := \sup_{t>0} \lt \int_0^t v(s)\ds  \int_t^\infty \frac{ \lt \int_0^y w(x)\dx \rt^{1-q'} \! \Up(t,y) u(y) \int_0^y v(s) U^{p-1}(s,y) \ds}{ \lt \int_0^y v(z)\Up(z,y)\dz \rt^{2-\frac{q'}p}} \dy \rt^{-\frac1{q'}} \!\!\!\! < \infty,
          \]
          \[ 
            A_4 := \sup_{t>0} \lt \int_0^t v(s)\Up(s,t)\ds  \int_t^\infty \frac{ \lt \int_0^y w(x)\dx \rt^{1-q'} \!u(y) \int_0^y v(s) U^{p-1}(s,y) \ds}{ \lt \int_0^y v(z)\Up(z,y)\dz \rt^{2-\frac{q'}p}} \dy \rt^{-\frac1{q'}} \!\!\!\! < \infty
          \]
        and
          \[
            A_5 := \lt \intnn w(y)\dy \rt^\jq \lt \int_0^\infty v(s) U^p(s,\infty) \ds \rt^\mjp  < \infty.
          \]
        Moreover, the optimal constant $C$ in \eqref{34} satisfies $C\approx A_3 + A_4 + A_5$.
   
   {\rm(iv)}
        Let $0<q<1$, $q<p<\infty$ and $r:=\frac{pq}{p-q}$. Then \eqref{34} holds for all $f\in\M$ with a~constant $C>0$ independent of $f$ if and only if $A_5<\infty$ and
          \[
            A_6 := \lt \intnn v(t) \lt \int_t^\infty \frac{ \lt \int_0^y w(x)\dx \rt^{1-q'} U^\frac{p-q}{1-q}(t,y) u(y) \int_0^y v(s) U^{p-1}(s,y)\ds}{ \lt \int_0^y v(z)\Up(z,y)\dz \rt^{2-q'}} \dy \rt^{-\frac{r}{q'}} \!\!\! \dt \rt^\jr \!\!\!\! < \infty,
          \]  
        Moreover, the optimal constant $C$ in \eqref{34} satisfies $C\approx A_5 + A_6$.
\end{thm}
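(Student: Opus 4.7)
The plan is first to reformulate \eqref{34} in a form amenable to the discretization machinery of Section 3. By Lemma \ref{29} (applied with $m=1$, which matches \eqref{34}), the validity of \eqref{34} with constant $C$ is equivalent to
\[
\lt \intnn w(t) \lt \inttn h(y)\dy \rt^q \dt \rt^\jq \le C \lt \intnn v(t) \lt \inttn u(s) \int_s^\infty h(y)\dy \ds \rt^p \dt \rt^\jp
\]
for all $h\in\MM$, with the same constant.

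Next, I fix a discretizing sequence $\{\tk\}_{k\in\K}$ of $\CL$ from Theorem \ref{1} and set $a_k:=\intdkmj h(y)\dy$ for $k\in\K$. Applying Theorem \ref{18} with $m=1$ and absorbing the second sum in \eqref{117} by Fubini (the bound $\int_y^{\tk}h\le\sum_{j\ge k-1}a_j$ together with $\intdkmj \fii'=\fii(\tk)-\fii(\tkmj)\approx \fii(\tkmj)$ from \eqref{3}, \eqref{5} reduces it to the first sum), the right-hand side above is equivalent to $\bigl(\sumk \fii^p(\tkmj)\,a_k^p\bigr)^\jp$. For the left-hand side, the function $F(t):=\inttn h(y)\dy$ is nonincreasing with $F(\tk)\le F(t)\le F(\tkmj)$ for $t\in\dkmj$, so, up to an index shift on the grid, the left-hand side is comparable to the discrete quasi-norm $\bigl(\sumk W_k\bigl(\sum_{j\ge k}a_j\bigr)^q\bigr)^\jq$ with $W_k:=\intdkmj w(t)\dt$.

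The problem thus reduces to a discrete weighted Hardy inequality for nonnegative sequences $\{a_k\}_{k\in\K}$. In case (i) ($1\le q$, $p\le q$) the embedding $\ell^p\hra\ell^q$ combined with Proposition \ref{21} yields equivalence with $\sup_{k\in\K}\bigl(\sum_{j\le k}W_j\bigr)^\jq\fii(\tkmj)^{-1}<\infty$; anti-discretizing by the mean-value property of $\fii$ (Remark \ref{6}(i)) together with \eqref{11}, this becomes $A_1<\infty$. In case (ii) ($1\le q<p$) Hölder's inequality (Proposition \ref{40}) applied with the dual sequence, combined with Propositions \ref{20}--\ref{21}, yields an $\ell^r$-type discrete condition that anti-discretizes to $A_2<\infty$. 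For cases (iii) and (iv) ($0<q<1$), Proposition \ref{35}(ii) produces conditions involving the negative conjugate $q'$; anti-discretization then yields $A_3,A_4,A_5$ in case (iii) and $A_5,A_6$ in case (iv), with $A_5$ capturing the limiting behaviour at $k\to K$ (relevant when $K=\infty$ and $\fii(\infty)=\infty$).

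The main obstacle will be the anti-discretization in cases (iii) and (iv): the discrete conditions produced by Proposition \ref{35}(ii) are of the form $\bigl(\sum_k(\cdots)^{-q'}(\cdots)\bigr)^{-1/q'}$ with $q'<0$, and converting them into the integrals $A_3,A_4,A_6$ requires careful use of identity \eqref{44} for $\fii'$ (which produces exactly the numerator factor $u(y)\int_0^y v(s)U^{p-1}(s,y)\ds$ appearing in $A_3,A_4,A_6$), the two-sided bound \eqref{12} from Theorem \ref{10}, and Fubini-type rearrangements inside integrals over $(0,\infty)$. Necessity of each $A_i$ is obtained in parallel by testing the $h$-inequality on piecewise-defined $h$ built from the extremal sequences of Propositions \ref{40} and \ref{35}(ii); these saturate the discrete $\ell^p$- or $\ell^r$-norm on each block and, after anti-discretization, force $C\gtrsim A_i$.
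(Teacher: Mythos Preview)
Your reduction of the right-hand side to $\bigl(\sum_k \fii^p(\tkmj)\,a_k^p\bigr)^{1/p}$ with $a_k=\intdkmj h$ contains a genuine error. You claim that $\intdkmj\fii'=\fii(\tk)-\fii(\tkmj)\approx\fii(\tkmj)$ by \eqref{3} and \eqref{5}, but \eqref{5} holds only for $k\in\Kd$. For $k\in\Kj$ it is \eqref{4} that holds, controlling $\int_0^{\tk}v$ but not $\fii(\tk)$; on such blocks the ratio $\fii(\tk)/\fii(\tkmj)$ is unbounded (indeed, \eqref{3} provides only a \emph{lower} bound $\ge 2^{1+1/p}$). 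Thus $\sum_k(\intdkmj\fii h)^p$ is \emph{not} two-sidedly equivalent to $\sum_k\fii^p(\tkmj)a_k^p$, and the Copson--Lorentz functional cannot be reduced to a function of the block masses $a_k$ alone. Concretely, in case (i) your approach would require the discrete Hardy constant $\sup_k(\int_0^{\tk}w)^{1/q}\fii^{-1}(\tkmj)$ to be bounded by $A_1$, but if $w$ sits near $\tk$ in a block where $\fii(\tk)\gg\fii(\tkmj)$, this discrete constant is $\approx(\int_0^{\tk}w)^{1/q}\fii^{-1}(\tkmj)$ while $A_1\approx(\int_0^{\tk}w)^{1/q}\fii^{-1}(\tk)$; the sharp bound $C\lesssim A_1$ is lost.

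This is precisely why the paper does \emph{not} collapse to the $a_k$'s. It keeps the form $\sum_k(\intdkmj\fii h)^p$ from \eqref{19} and splits the left-hand side into a ``tail'' part $\B{5}$ (which your treatment handles correctly) and a ``local'' part $\B{4}=\sum_k\intdkmj w(t)(\int_t^{\tk}h)^q\dt$. The local part is then estimated by applying the Hardy inequality of Proposition~\ref{35} \emph{on each block with the continuous weight $\fii$}, producing the localized constants $\sup_{t\in\dkmj}(\int_{\tkmj}^t w)^{1/q}\fii^{-1}(t)$ (for $q\ge1$) or their $q<1$ analogues. These constants capture exactly the intra-block variation of $\fii$ that your reduction discards. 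For necessity the paper likewise uses the saturating functions $g_k,\psi_k$ from Proposition~\ref{35} rather than block-constant test functions; a piecewise-constant $h$ cannot saturate the local Hardy constants and would not recover $A_2,A_3,A_4,A_6$. Your anti-discretization programme is otherwise on the right track, but it must be carried out from these block-wise Hardy constants, not from a pure sequence inequality in the $a_k$'s.
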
 

\begin{proof}
  Let $\fii$ be the fundamental function of $\CLj$, and let $\{\tk\}_{k\in\K}$ be a~discretizing sequence of $\CLj$. Recall that the derivative $\fii'$ exists at a.e.~$t>0$ and is defined there by \eqref{44}. This explicit expression is used to state the $A$-conditions while the short notation $\fii'$ appears in the proof.
  
  \emph{Sufficiency.}
    Let $h\in\M$. By Theorem \ref{18} and Lemma \ref{29}, it suffices to prove that
      \begin{equation}\label{37}
        \lt \intnn w(t) \lt \int_t^\infty h(y)\dy \rt^q \dt \rt^\jq \le C \lt \sumk \lt\ \intdkmj \fii(y) h(y)\dy \rt^p \rt^\jp
      \end{equation}
    holds with some $C>0$. We have
      \Bdef{4}\Bdef{5}
      \begin{align*}
        & \intnn w(t) \lt \inttn h(y)\dy \rt^q \!\! \dt = \sumk\ \intdkmj w(t) \lt \inttn h(y)\dy \rt^q \!\! \dt \\
        & \quad \approx \sumk\ \intdkmj w(t) \lt \int_t^\tk h(y)\dy \rt^q \!\! \dt + \sumkmj\ \intdkmj w(t)\dt  \lt \int_\tk^\infty h(y)\dy \rt^q =: \B{4}+\B{5}.
      \end{align*} 
    At first, assume that $1\le q<\infty$. By the Hardy inequality (Proposition \ref{35}(i)), we get 
      \Bdef{6}
        \[  
          \B{4} \lesssim \sumk \lt\ \intdkmj \fii(y)h(y)\dy \rt^q \sup_{t\in\dkmj}\ \int_{\tkmj}^t w(s)\ds\ \fii^{-q}(t) =: \B{6}.
        \]
    
    (i) Let $0<p\le q$ and $A_1<\infty$. Then 
      \begin{align*}
        \B{6} & \le \sumk \lt\ \intdkmj \fii(y)h(y)\dy \rt^q \sup_{j\in\K} \sup_{t\in\djmj}\ \int_{\tjmj}^t w(s)\ds\ \fii^{-q}(t) \\  
              & \le \lt \sumk \lt\ \intdkmj \fii(y)h(y)\dy \rt^p \rt^\qp \sup_{j\in\K} \sup_{t\in\djmj}\ \int_{\tjmj}^t w(s)\ds\ \fii^{-q}(t) \\
              & \le \lt \sumk \lt\ \intdkmj \fii(y)h(y)\dy \rt^p \rt^\qp A^q_1.
      \end{align*}
    The second inequality above follows from convexity of the $\qp$-th power. Next, for $\B{5}$ we have
      \begin{align}
        \B{5} & \le      \sumkmj\, \fii^q(\tk) \lt \int_{\tk}^\infty h(y)\dy \rt^q \sup_{j\in\K}\, \intdjmj w(s)\ds\ \fii^{-q}(\tj) \nonumber\\
              & \lesssim \sumkmj\, \fii^q(\tk) \lt\ \intdk h(y)\dy \rt^q \sup_{j\in\K}\, \intdjmj w(s)\ds\ \fii^{-q}(\tj) \label{38}\\
              & \le      \lt \sumkmj\, \fii^p(\tk) \lt\ \intdk h(y)\dy \rt^p \rt^\qp \sup_{j\in\K}\, \intdjmj w(s)\ds\ \fii^{-q}(\tj) \label{39}\\
              & \le      \lt \sumkmj  \lt\ \intdk \fii(y) h(y)\dy \rt^p \rt^\qp A^q_1. \nonumber
      \end{align}
    In here, step \eqref{38} follows from Proposition \ref{20} and \eqref{3}, and step \eqref{39} follows from convexity of the $\qp$-th power. We have just proved that \eqref{37} holds with $C\lesssim A$.
    
    (ii) Let $q<p<\infty$ and $A_2<\infty$. Then the H\"older inequality (Proposition \ref{40}) implies 
      \[
        \B{6} \le \lt \sumk \lt\ \intdkmj \fii(y)h(y)\dy \rt^p \rt^\qp \lt \sum_{j\in\K} \, \sup_{t\in\djmj} \lt\ \int_{\tjmj}^t w(s)\ds \rt^\rq \fii^{-r}(t) \rt^\qr.
      \]
    Moreover, one has 
      \begin{align}
        \B{5} & \le      \sumkmj\, \fii^q(\tk) \lt \int_{\tk}^\infty h(y)\dy \rt^q \intdkmj w(s)\ds\ \fii^{-q}(\tk) \nonumber\\
              & \le      \lt \sumkmj\, \fii^p(\tk) \lt \int_{\tk}^\infty h(y)\dy \rt^p \rt^\qp \lt \sum_{j\in\K-1}\, \lt\ \intdjmj w(s)\ds \rt^\rq \fii^{-r}(\tj)  \rt^\qr \label{41}\\
              & \lesssim \lt \sumkmj\, \fii^p(\tk) \lt\ \intdk h(y)\dy \rt^p \rt^\qp \lt \sum_{j\in\K-1}\, \lt\ \intdjmj w(s)\ds \rt^\rq \fii^{-r}(\tj)  \rt^\qr \label{42}\\
              & \lesssim \lt \sumkmj \lt\ \intdk \fii(y) h(y)\dy \rt^p \rt^\qp \lt \sum_{j\in\K} \, \sup_{t\in\djmj} \lt\ \int_{\tjmj}^t w(s)\ds \rt^\rq \fii^{-r}(t) \rt^\qr. \nonumber
      \end{align}
    In \eqref{41} we used the H\"older inequality again. Estimate \eqref{42} follows from Proposition \ref{20} and \eqref{3}. To complete this part, we need the following estimate.
      \begin{align}
        & \sum_{j\in\K} \, \sup_{t\in\djmj} \lt\ \int_{\tjmj}^t w(s)\ds \rt^\rq \fii^{-r}(t)  \nonumber \\
        & =        \sum_{j\in\K} \, \sup_{t\in\djmj} \lt\ \int_{\tjmj}^t w(s)\ds \rt^\rq \!\fii^p(t) \fii^{-\frac{pr}q}(t) \nonumber \\
        & \lesssim \sum_{j\in\K} \ \intdjmt v(y)\dy\ \Up(\djmd) \sup_{t\in\djmj} \lt \int_0^t w(s)\ds \rt^\rq \! \fii^{-\frac{pr}q}(t) \label{43}\\
        & \quad +  \sum_{j\in\K} \ \intdjmd v(y)\Up(y,\tjmj)\dy \sup_{t\in\djmj} \lt \int_0^t w(s)\ds \rt^\rq \! \fii^{-\frac{pr}q}(t) \nonumber\\
        & \quad +  \sum_{j\in\K} \ \intdjmt v(y)\dy\ \sup_{t\in\djmj} \Up(\tjmj,t) \lt \int_0^t w(s)\ds \rt^\rq \! \fii^{-\frac{pr}q}(t) \nonumber\\
        & \lesssim \sum_{j\in\K} \ \intdjmj v(y) \sup_{t\in(y,\infty)} \Up(y,t) \lt \int_0^t w(s)\ds \rt^\rq \! \fii^{-\frac{pr}q}(t)\ \dy\nonumber\\
        & \le A^r_2. \nonumber
      \end{align}
    Inequality \eqref{43} follows from \eqref{12}. Summing up, we have proved that \eqref{37} holds with $C\lesssim A_2$.
    
    From now on, assume that $0<q<1$. Then 
      \Bdef{7}\Bdef{8}
        \begin{align}  
          \B{4} & \lesssim \sumk \lt\ \intdkmj \fii(y)h(y)\dy \rt^q \lt\ \intdkmj \lt\ \int_{\tkmj}^s w(y)\dy \rt^{-q'} \hspace{-6pt} w(s) \fii^{q'}(s) \ds \rt^{1-q}  \label{45}\\
                & \approx  \sumk \lt\ \intdkmj \fii(y)h(y)\dy \rt^q \intdkmj w(y)\dy\ \fii^{-q}(\tk)  \label{46}\\
                & \quad +  \sumk \lt\ \intdkmj \fii(y)h(y)\dy \rt^q \lt\ \intdkmj \lt\ \int_{\tkmj}^s w(y)\dy \rt^{1-q'} \hspace{-6pt} \fii^{q'-1}(s)\fii'(s) \ds \rt^{1-q} \nonumber \\
                &  =: \B{7}+\B{8}. \nonumber
        \end{align}
    Inequality \eqref{45} follows from the appropriate version of the Hardy inequality (Proposition \ref{35}(ii)). In step \eqref{46} we used integration by parts (notice the existence of $\fii'$ a.e.~on $(0,\infty)$ and the fact that $q'<0$). We will proceed by estimating the terms $\B{7}$, $\B{8}$ and $\B{5}$.
        
    (iii) Let $0<p\le q$ and $A_3 + A_4 + A_5<\infty$. It is easily verified that
      \begin{equation}\label{48}
        \fii^{-\alpha}(t) \approx \fii^{-\alpha}(\infty) + \int_t^\infty \fii^{-\alpha-1}(s) \fii'(s)\ds
      \end{equation}
    for any $t>0$ and any $\alpha>0$ (the constant in ``$\approx$'' depends on $\alpha$). By Proposition \ref{20} and \eqref{3}, one gets 
      \begin{align*}
        \B{5} & \le      \sumkmj\, \fii^q(\tk) \lt \int_{\tk}^\infty h(y)\dy \rt^q \sup_{j\in\K}\, \intdjmj w(s)\ds\ \fii^{-q}(\tj) \\
              & \lesssim \sumkmj\, \fii^q(\tk) \lt\ \intdk h(y)\dy \rt^q \sup_{j\in\K}\, \intdjmj w(s)\ds\ \fii^{-q}(\tj) \\
              & \le      \sumkmj  \lt\ \intdk \fii(y) h(y)\dy \rt^q \sup_{j\in\K}\, \intdjmj w(s)\ds\ \fii^{-q}(\tj). 
      \end{align*}
    Hence, using also convexity of the $\qp$-th power, we obtain
      \begin{align*}
        \B{5}+\B{7} & \lesssim \sumk \lt\ \intdkmj \fii(y)h(y)\dy \rt^q \sup_{j\in\K} \intdjmj w(y)\dy\ \fii^{-q}(\tj). \\
                    & \le      \lt \sumk \lt\ \intdkmj \fii(y)h(y)\dy \rt^p \rt^\qp \sup_{j\in\K} \intdjmj w(y)\dy\ \fii^{-q}(\tj). 
      \end{align*}
    Let us proceed as follows.
      \begin{align}
        & \sup_{j\in\K} \intdjmj w(y)\dy\ \fii^{-q}(\tj) \nonumber \\
                    & =        \sup_{j\in\K} \intdjmj w(y)\dy\ \lt\fii^p(\tj) \fii^{q'-p}(\tj)\rt^{-\frac{q}{q'}} \nonumber \\
                    & \lesssim \intnn w(y)\dy\ \fii^{-q}(\infty) +  \sup_{j\in\K} \intdjmj w(y)\dy\ \fii^{-\frac{pq}{q'}}(\tj) \lt \int_{\tj}^\infty \fii^{q'-p-1}(x)\fii'(x)\dx \rt^{-\frac{q}{q'}}  \label{50}\\
                    & \le      A_5^q + A_4^q. \nonumber 
      \end{align}
    Estimate \eqref{50} is a~consequence of \eqref{48} (notice that $-\frac{q}{q'}=\frac1{1-q'}$). Next, from convexity of the $\qp$-th power we get
      \begin{align*}
        \B{8} & \le      \sumk \lt\ \intdkmj \fii(y)h(y)\dy \rt^q \lt \,\sup_{j\in\K} \intdjmj \lt\ \int_{\tjmj}^s w(y)\dy \rt^{1-q'} \hspace{-6pt} \fii^{q'-1}(s)\fii'(s) \ds \rt^{-\frac{q}{q'}} \\  
              & \le      \lt \sumk \lt\ \intdkmj \fii(y)h(y)\dy \rt^p \rt^\qp \lt \,\sup_{j\in\K} \intdjmj \lt\ \int_{\tjmj}^s w(y)\dy \rt^{1-q'} \hspace{-6pt} \fii^{q'-1}(s)\fii'(s) \ds \rt^{-\frac{q}{q'}}.
      \end{align*}
    Furthermore, the following estimate is valid:
      \begin{align}
              & \sup_{j\in\K} \intdjmj \lt\ \int_{\tjmj}^s w(y)\dy \rt^{1-q'} \hspace{-6pt} \fii^{q'-1}(s)\fii'(s) \ds \nonumber\\
              & =        \sup_{j\in\K} \intdjmj \lt\ \int_{\tjmj}^s w(y)\dy \rt^{1-q'} \hspace{-6pt} \fii^p(s)\fii^{q'-p-1}(s)\fii'(s) \ds \nonumber\\
              & \lesssim \sup_{j\in\K} \intdjmt v(t)\dt\ \Up(\djmd) \intdjmj \lt\ \int_{\tjmj}^s w(y)\dy \rt^{1-q'} \hspace{-6pt} \fii^{q'-p-1}(s)\fii'(s) \ds \label{47}\\
              & \quad +  \sup_{j\in\K} \intdjmd v(t)\Up(t,\tjmj) \dt \intdjmj \lt\ \int_{\tjmj}^s w(y)\dy \rt^{1-q'} \hspace{-6pt} \fii^{q'-p-1}(s)\fii'(s) \ds \nonumber\\
              & \quad +  \sup_{j\in\K} \intdjmd v(t)\dt \intdjmj \lt\ \int_{\tjmj}^s w(y)\dy \rt^{1-q'} \hspace{-6pt} \Up(\tjmj,s) \fii^{q'-p-1}(s)\fii'(s) \ds \nonumber\\
              & \lesssim A_4^q + A_3^q. \nonumber
      \end{align}
    In here, \eqref{47} follows from \eqref{12}. By now, we have verified that \eqref{37} holds with $C\lesssim A_3+A_4+A_5$.

    (iv) Let $q<p<\infty$ and $A_5+A_6+A_7<\infty$. We have
      \begin{align}
        \B{5} & \le      \lt \sumkmj \fii^q(\tk) \lt\, \int_{\tk}^\infty h(y)\dy \rt^p \rt^\qp \lt \sum_{j\in\K} \lt\ \intdjmj w(s)\ds \rt^\rq \fii^{-r}(\tj) \rt^\qr \label{54}\\
              & \lesssim \lt \sumkmj \fii^q(\tk) \lt\ \intdk h(y)\dy \rt^p \rt^\qp \lt \sum_{j\in\K} \lt\ \intdjmj w(s)\ds \rt^\rq \fii^{-r}(\tj) \rt^\qr \label{55}\\
              & \le      \lt \sumkmj \lt\ \intdk \fii(y) h(y)\dy \rt^p \rt^\qp \lt \sum_{j\in\K} \lt\ \intdjmj w(s)\ds \rt^\rq \fii^{-r}(\tj) \rt^\qr \nonumber. 
      \end{align} 
    Step \eqref{54} follows from the H\"{o}lder inequality (Proposition \ref{40}) while in step \eqref{55} we used Proposition \ref{20} and \eqref{3}. The H\"{o}lder inequality also implies
      \[ 
        \B{7} \le \lt \sumkmj \lt\ \intdk \fii(y) h(y)\dy \rt^p \rt^\qp \lt \sum_{j\in\K} \lt\ \intdjmj w(s)\ds \rt^\rq \fii^{-r}(\tj) \rt^\qr.
      \]
    Moreover, one has
      \begin{align}
        & \sum_{j\in\K} \lt\ \intdjmj w(s)\ds \rt^\rq \fii^{-r}(\tj) \nonumber\\
        & =         \sum_{j\in\K} \lt\ \intdjmj w(s)\ds \rt^\rq \lt \fii^{-\frac{pq'}r}(\tj) \fii^{pq'-p}(\tj) \rt^{-\frac r{q'}} \nonumber\\
        & \lesssim  \sum_{j\in\K} \lt\ \intdjmj w(s)\ds \rt^\rq \fii^{-r}(\infty) + \sum_{j\in\K-1} \lt\ \intdjmj w(s)\ds \rt^\rq \fii^p(\tj) \lt \int_\tj^\infty \fii^{pq'-p-1}(x)\fii'(x)\dx \rt^{-\frac{r}{q'}} \label{56}\\
        & \lesssim  A_5^r + \sum_{j\in\K-1} \fii^p(\tj) \lt \int_\tj^\infty \lt \int_0^x  w(s)\ds \rt^{1-q'} \!\!\! \fii^{pq'-p-1}(x)\fii'(x)\dx \rt^{-\frac{r}{q'}} \label{57}\\
        & \lesssim  A_5^r + \sum_{j\in\K-1} \, \intdjmd v(y)\dy\ \Up(\djmj) \lt \int_\tj^\infty \lt \int_0^x  w(s)\ds \rt^{1-q'} \!\!\! \fii^{pq'-p-1}(x)\fii'(x)\dx \rt^{-\frac{r}{q'}} \label{58}\\
        & \quad +   \sum_{j\in\K-1} \, \intdjmj v(y)\Up(y,\tj)\dy\ \lt \int_\tj^\infty \lt \int_0^x  w(s)\ds \rt^{1-q'} \!\!\! \fii^{pq'-p-1}(x)\fii'(x)\dx \rt^{-\frac{r}{q'}}\nonumber \\
        & \lesssim  A_5^r + \sum_{j\in\K} \, \intdjmj v(y) \lt \int_y^\infty \lt \int_0^x  w(s)\ds \rt^{1-q'} \!\!\!U^\frac{p-q}{1-q}(y,x) \fii^{pq'-p-1}(x)\fii'(x)\dx \rt^{-\frac{r}{q'}} \nonumber\\
        & \lesssim  A_5^r + A_6^r. \nonumber
      \end{align}
    Inequality \eqref{56} follows from \eqref{48}, in step \eqref{57} we made use of convexity of the $\rq$-th power, and estimate \eqref{58} follows from \eqref{12} (by setting $t:=\tj\in\dj$). So far we have got
      \[
        \B{5}+\B{7} \lesssim \lt \sumkmj \lt\ \intdk \fii(y) h(y)\dy \rt^p \rt^\qp (A_5 + A_6)^q. 
      \]
    Next, the H\"older inequality (Proposition \ref{40}) yields
      \[
        \B{8} \le \lt \sumk \lt\ \intdkmj \fii(y)h(y)\dy \rt^p \rt^\qp \lt \sum_{j\in\K} \lt\ \intdjmj \lt\ \int_{\tjmj}^s w(y)\dy \rt^{1-q'} \hspace{-6pt} \fii^{q'-1}(s)\fii'(s) \ds \rt^{-\frac{r}{q'}} \rt^\qr. 
      \]
    Let us continue as follows.
      \begin{align}        
        & \sum_{j\in\K} \lt\ \intdjmj \lt\ \int_{\tjmj}^s w(y)\dy \rt^{1-q'} \hspace{-6pt} \fii^{q'-1}(s)\fii'(s) \ds \rt^{-\frac{r}{q'}} \nonumber\\
        & =        \sum_{j\in\K} \lt\ \intdjmj \lt\ \int_{\tjmj}^s w(y)\dy \rt^{1-q'} \hspace{-6pt} \fii^{-\frac{pq'}r}(s) \fii^{pq'-p-1}(s)\fii'(s) \ds \rt^{-\frac{r}{q'}} \nonumber\\
        & \lesssim \sum_{j\in\K}\ \intdjmt v(x)\dx\ \Up(\djmd) \lt\ \intdjmj \lt\ \int_{\tjmj}^s w(y)\dy \rt^{1-q'} \hspace{-6pt} \fii^{pq'-p-1}(s)\fii'(s) \ds \rt^{-\frac{r}{q'}} \label{60}\\
        & \quad +  \sum_{j\in\K}\ \intdjmd v(x)\Up(x,\tjmj) \dx\ \lt\ \intdjmj \lt\ \int_{\tjmj}^s w(y)\dy \rt^{1-q'} \hspace{-6pt} \fii^{pq'-p-1}(s)\fii'(s) \ds \rt^{-\frac{r}{q'}} \nonumber\\
        & \quad +  \sum_{j\in\K}\ \intdjmd v(x)\dx\ \lt\ \intdjmj \lt\ \int_{\tjmj}^s w(y)\dy \rt^{1-q'} \hspace{-6pt} U^\frac{p-q}{1-q}(\tjmj,s) \fii^{pq'-p-1}(s)\fii'(s) \ds \rt^{-\frac{r}{q'}} \nonumber\\
        & \lesssim \sum_{j\in\K}\ \intdjmj v(x)\dx\ \lt\ \int_x^\infty \lt\ \int_{x}^s w(y)\dy \rt^{1-q'} \hspace{-6pt} U^\frac{p-q}{1-q}(x,s) \fii^{pq'-p-1}(s)\fii'(s) \ds \rt^{-\frac{r}{q'}} \nonumber\\
        & = A_6^r. \nonumber
      \end{align}
    In here, we used \eqref{12} to get \eqref{60}. By combining all the obtained estimates, we verify that \eqref{37} holds with $C\lesssim A_5 + A_6$. This also completes the whole sufficiency part. 

    \emph{Necessity.} Assume that $p,q\in(0,\infty)$ and \eqref{34} holds with a~$C\in(0,\infty)$ for all $f\in\M$.     
    
    (i) Let $E\subset\R^n$ be a~set of measure $x\in(0,\infty)$ and define $f:=\chi_E$. Then $\f=\chi_{[0,x)}$. Since \eqref{34} holds with this particular $f$, we have
	    \[
		    \lt \int_0^x w(t)\dt \rt^\jq \lt \int_0^x v(t) U^p(t,x) \dt \rt^\jp \le C.
		\]
	Taking the supremum over $x>0$, we get
		\[
			A_1 \le C.
		\]
	This inequality holds for any choice of positive parameters $p,$ $q$. In particular, we get necessity of $A_1$ in case (i). 
    
    (ii) Let $1\le q<p<\infty$. Then
      \Bdef{9}\Bdef{10}\Bdef{11}
      \begin{align}
        A_2^r & =         \sum_{k\in\K} \ \intdkmj v(y) \sup_{t\in(y,\infty)} \Up(y,t) \lt \int_0^t w(s)\ds \rt^\rq \! \fii^{-\frac{pr}q}(t)\, \dy \nonumber\\
              & \lesssim  \sum_{k\in\K} \ \intdkmj v(y) \sup_{t\in(y,\tk)} \Up(y,t) \lt \int_0^t w(s)\ds \rt^\rq \! \fii^{-\frac{pr}q}(t)\, \dy \nonumber\\
              & \quad +   \sum_{k\in\K-1} \ \intdkmj v(y) \Up(y,\tk) \dy \sup_{t\in(\tk,\infty)} \lt \int_0^t w(s)\ds \rt^\rq \! \fii^{-\frac{pr}q}(t) \nonumber\\
              & \quad +   \sum_{k\in\K-1} \ \intdkmj v(y) \dy \sup_{t\in(\tk,\infty)} \Up(\tk,t) \lt \int_0^t w(s)\ds \rt^\rq \! \fii^{-\frac{pr}q}(t) \nonumber\\
              & =: \B{9}+\B{10}+\B{11}.\nonumber
      \end{align}
    The estimating continues as follows.
      \begin{align}
        \B{9} & =         \sum_{k\in\Kj} \ \intdkmj v(y) \sup_{t\in(y,\tk)} \Up(y,t) \lt \int_0^t w(s)\ds \rt^\rq \! \fii^{-\frac{pr}q}(t)\, \dy \nonumber\\
              & \quad +   \sum_{k\in\Kd} \ \intdkmj v(y) \sup_{t\in(y,\tk)} \Up(y,t) \lt \int_0^t w(s)\ds \rt^\rq \! \fii^{-\frac{pr}q}(t)\, \dy \nonumber\\
              & \le       \sum_{k\in\Kj} \ \int_0^\tk v(y) \dy \ \sup_{t\in\dk} \Up(\tkmj,t) \lt \int_0^t w(s)\ds \rt^\rq \! \fii^{-\frac{pr}q}(t) \nonumber\\
              & \quad +   \sum_{k\in\Kd} \ \fii^p(\tk) \sup_{t\in\dk} \lt \int_0^t w(s)\ds \rt^\rq \! \fii^{-\frac{pr}q}(t) \nonumber\\
              & \approx   \sum_{k\in\Kj} \ \int_0^\tkmj v(y) \dy \ \sup_{t\in\dk} \Up(\tkmj,t) \lt \int_0^t w(s)\ds \rt^\rq \! \fii^{-\frac{pr}q}(t) \label{64}\\
              & \quad +   \sum_{k\in\Kd} \ \fii^p(\tkmj) \sup_{t\in\dk} \lt \int_0^t w(s)\ds \rt^\rq \! \fii^{-\frac{pr}q}(t) \nonumber\\
              & \lesssim  \sum_{k\in\K} \ \sup_{t\in\dk} \lt \int_0^t w(s)\ds \rt^\rq \! \fii^{-r}(t). \nonumber
      \end{align}
    To get \eqref{64}, relations \eqref{4} and \eqref{5} were used. Next, we get
      \begin{align}
        \B{10} & \le      \sum_{k\in\K-1} \fii^p(\tk) \sup_{t\in(\tk,\infty)} \lt \int_0^t w(s)\ds \rt^\rq \! \fii^{-\frac{pr}q}(t) \nonumber\\
               & \lesssim \sum_{k\in\K-1} \fii^p(\tk) \sup_{t\in\dk} \lt \int_0^t w(s)\ds \rt^\rq \! \fii^{-\frac{pr}q}(t) \label{65}\\
               & \le      \sum_{k\in\K}\ \sup_{t\in\dk} \lt \int_0^t w(s)\ds \rt^\rq \! \fii^{-r}(t). \nonumber 
      \end{align}
    Step \eqref{65} makes use of Proposition \ref{20} and \eqref{3}. Furthermore, one has
      \begin{align}
        \B{11} & \le      \sum_{k\in\K-1} \ \intdkmj v(y) \dy \sum_{j=k}^{K-1} \,\sup_{t\in\dj} \Up(\tk,t) \lt \int_0^t w(s)\ds \rt^\rq \! \fii^{-\frac{pr}q}(t) \nonumber\\
               & \lesssim \sum_{k\in\K-2} \ \intdkmj v(y) \dy \sum_{j=k+1}^{K-1} \Up(\tk,\tj) \, \sup_{t\in\dj} \lt \int_0^t w(s)\ds \rt^\rq \! \fii^{-\frac{pr}q}(t) \nonumber\\
               & \quad +  \sum_{k\in\K-1}  \int_0^\tk v(y) \dy\ \sum_{j=k}^{K-1} \, \sup_{t\in\dj} \Up(\tj,t) \lt \int_0^t w(s)\ds \rt^\rq \! \fii^{-\frac{pr}q}(t) \nonumber\\
               & \lesssim \sum_{j\in\K-1} \ \sup_{t\in\dj} \lt \int_0^t w(s)\ds \rt^\rq \! \fii^{-\frac{pr}q}(t) \sum_{k=-\infty}^{j-1}\,\intdkmj v(y) \dy \ \Up(\tk,\tj)  \label{66}\\
               & \quad +  \sum_{k\in\K-1}  \int_0^\tk v(y) \dy \ \sup_{t\in\dk} \Up(\tk,t) \lt \int_0^t w(s)\ds \rt^\rq \! \fii^{-\frac{pr}q}(t) \nonumber\\
               & \le      \sum_{j\in\K-1} \ \sup_{t\in\dj} \lt \int_0^t w(s)\ds \rt^\rq \! \fii^{-\frac{pr}q}(t) \fii^p(\tj)  \nonumber\\
               & \quad +  \sum_{k\in\K-1} \ \sup_{t\in\dk} \int_0^\tk v(y) \dy \ \Up(\tk,t) \lt \int_0^t w(s)\ds \rt^\rq \! \fii^{-\frac{pr}q}(t) \nonumber\\
               & \lesssim \sum_{k\in\K}\ \sup_{t\in\dk} \lt \int_0^t w(s)\ds \rt^\rq \! \fii^{-r}(t). \nonumber
      \end{align}
    The estimate of the second sum in \eqref{66} follows from Proposition \ref{20} with \eqref{2}. Moreover, the following estimate is valid.
      \begin{align}
        & \sum_{k\in\K}\ \sup_{t\in\dk} \lt \int_0^t w(s)\ds \rt^\rq \! \fii^{-r}(t) \nonumber\\
            & \approx  \sum_{k\in\K}\ \sup_{t\in\dkmj} \lt\ \int_\tkmj^t w(s)\ds \rt^\rq \fii^{-r}(t) + \sum_{k\in\K} \lt \int_0^\tkmj w(s)\ds \rt^\rq \fii^{-r}(\tkmj) \nonumber\\
            & \lesssim \sum_{k\in\K}\ \sup_{t\in\dkmj} \lt\ \int_\tkmj^t w(s)\ds \rt^\rq \fii^{-r}(t) + \sum_{k\in\K} \lt\ \intdkmj w(s)\ds \rt^\rq \fii^{-r}(\tkmj) \label{69}\\
            & \lesssim \sum_{k\in\K}\ \sup_{t\in\dkmj} \lt\ \int_\tkmj^t w(s)\ds \rt^\rq \fii^{-r}(t). \nonumber
      \end{align}
    Step \eqref{69} follows from Proposition \ref{21} with \eqref{3}. So far, we have proved
      \[
        A_2 \lesssim \lt \sum_{k\in\K}\ \sup_{t\in\dkmj} \lt\ \int_\tkmj^t w(s)\ds \rt^\rq \fii^{-r}(t)\rt^\jr.
      \]
    By saturation of the Hardy inequality (Proposition \ref{35}(i)), for each $k\in\K$ there exists a~function $g_k\in\MM$ supported in $\dkmj$ and such that $\intdkmj \fii(y)g_k(y)\dy=1$ and
    \begin{equation}\label{67}
	    \sup_{t\in\dkmj} \lt\ \int_\tkmj^t w(s)\ds \rt^\jq \fii^{-1}(t) \lesssim \lt\ \intdkmj \lt \int_t^\tk g_k(y)\dy \rt^q w(t)\dt \rt^\jq.
    \end{equation}
    Let $\{c_k\}_{k\in\K}$ be a~nonnegative sequence such that $\sumk c_k^p=1$ and 
      \begin{equation}\label{70}
        \lt \sum_{k\in\K}\ \sup_{t\in\dkmj} \lt\ \int_\tkmj^t w(s)\ds \rt^\rq \fii^{-r}(t)\rt^\jr \lesssim \lt \sum_{k\in\K}\ c_k^q \sup_{t\in\dkmj} \lt\ \int_\tkmj^t w(s)\ds \rt^\jq \fii^{-1}(t)\rt^\jq.
      \end{equation}
    Existence of such sequence is granted by saturation of the discrete H\"older inequality (Proposition \ref{40}). Let us define the function $g:=\sumk c_kg_k$. Then we have
      \begin{align}
        \lt \sum_{k\in\K}\ \sup_{t\in\dkmj} \lt\ \int_\tkmj^t w(s)\ds \rt^\rq \fii^{-r}(t)\rt^\jr & \lesssim \lt \sum_{k\in\K}\ c_k^q \sup_{t\in\dkmj} \lt\ \int_\tkmj^t w(s)\ds \rt^\jq \fii^{-1}(t)\rt^\jq \nonumber\\
                                                                                                  & \lesssim \lt \sum_{k\in\K}\ c_k^q\ \intdkmj \lt \int_t^\tk g_k(y)\dy \rt^q w(t)\dt \rt^\jq \label{71}\\
                                                                                                  & =        \lt \intnn \lt \int_t^\infty g(y)\dy \rt^q w(t)\dt \rt^\jq \nonumber\\
                                                                                                  & \le C    \lt \intnn v(t)\lt\int_t^\infty u(s) \int_s^\infty g(y)\dy\ds \rt^p \dt \rt^\jp \label{72}\\
                                                                                                  & \approx C\lt \sum_{k\in\K} \lt\ \intdkmj \fii(y)g(y) \dy \rt^p \rt^\jp \label{73}\\
                                                                                                  & \approx C\lt \sum_{k\in\K} c_k^p \lt\ \intdkmj \fii(y)g_k(y) \dy \rt^p \rt^\jp \nonumber\\
                                                                                                  & = C.\nonumber
      \end{align}
    In step \eqref{71} we used \eqref{67}, inequality \eqref{72} follows from \eqref{34}, and \eqref{73} follows from Theorem \ref{18}. We have now verified that
      \[
        A_2 \lesssim \lt \sum_{k\in\K}\ \sup_{t\in\dkmj} \lt\ \int_\tkmj^t w(s)\ds \rt^\rq \fii^{-r}(t)\rt^\jr \lesssim C.
      \]
    
    Throughout the next part, assume that $0<q<1$. Saturation of the Hardy inequality (Proposition \ref{35}(ii)) then guarantees that for each $k\in\K$ there exists a~function $\psi_k\in\MM$ supported in $\dkmj$ and such that $\intdkmj \fii(y)\psi_k(y)\dy=1$ and
      \begin{equation}\label{74}
        \lt\ \intdkmj \lt\ \int_\tkmj^t w(s)\ds \rt^{-q'} \!\! w(t) \fii^{-q'}(t) \dt \rt^{-\frac1{q'}} \lesssim \lt\ \intdkmj \lt \int_t^\tk \psi_k(y)\dy \rt^q w(t)\dt \rt^\jq.
      \end{equation}
      
    (iii) Let $0<p\le q$. Then
      \Bdef{12}\Bdef{13}\Bdef{14}
      \begin{align*}
        A_3^{-q'}& \approx  \sup_{k\in\K}\ \sup_{t\in\dkmj} \int_0^t v(s)\ds \int_t^\infty \lt \int_0^y w(x)\dx \rt^{1-q'} \!\!\! \Up(t,y) \fii^{q'-p-1}(y)\fii'(y)\dy \\
                 & \lesssim \sup_{k\in\K}\ \sup_{t\in\dkmj} \int_0^t v(s)\ds \int_t^\tk \lt \int_0^y w(x)\dx \rt^{1-q'} \!\!\! \Up(t,y) \fii^{q'-p-1}(y)\fii'(y)\dy \\
                 & \quad +  \sup_{k\in\K-1}\ \sup_{t\in\dkmj} \int_0^t v(s)\ds\ \Up(t,\tk) \int_\tk^\infty \lt \int_0^y w(x)\dx \rt^{1-q'} \!\!\! \fii^{q'-p-1}(y)\fii'(y)\dy \\
                 & \quad +  \sup_{k\in\K-1}\, \int_0^\tk v(s)\ds \int_\tk^\infty \lt \int_0^y w(x)\dx \rt^{1-q'} \!\!\! \Up(\tk,y) \fii^{q'-p-1}(y)\fii'(y)\dy \\
                 & =: \B{12}+\B{13}+\B{14}.
      \end{align*}
    One can show that
      \begin{equation}\label{78}  
        \sup_{k\in\K}\ \intdkmj \lt \int_0^y w(x)\dx \rt^{1-q'} \!\!\! \fii^{q'-1}(y)\fii'(y)\dy \lesssim \sup_{k\in\K}\, \intdkmj \lt\ \int_\tkmj^y w(x)\dx \rt^{-q'} \!\!\! w(y) \fii^{q'}(y)\dy.
      \end{equation}
    Indeed, there holds
      \begin{align}
        & \sup_{k\in\K}\ \intdkmj \lt \int_0^y w(x)\dx \rt^{1-q'} \!\!\! \fii^{q'-1}(y)\fii'(y)\dy \nonumber\\
        & \lesssim \sup_{k\in\K}\, \lt \int_0^\tkmj w(x)\dx \rt^{1-q'} \!\!\!\!\! \intdkmj \fii^{q'-1}(y)\fii'(y)\dy + \sup_{k\in\K}\, \intdkmj \lt\ \int_\tkmj^y w(x)\dx \rt^{1-q'} \hspace{-8pt}\fii^{q'-1}(y)\fii'(y)\dy \nonumber\\
        & \lesssim \sup_{k\in\K}\, \lt \int_0^\tkmj w(x)\dx \rt^{1-q'} \!\!\! \fii^{q'}(\tkmj) +  \sup_{k\in\K}\, \intdkmj \lt\ \int_\tkmj^y w(x)\dx \rt^{-q'} \!\!\! w(y) \fii^{q'}(y)\dy \label{76}\\
        & \lesssim \sup_{k\in\K}\, \lt\ \intdkmd w(x)\dx \rt^{1-q'} \!\!\! \fii^{q'}(\tkmj) +  \sup_{k\in\K}\, \intdkmj \lt\ \int_\tkmj^y w(x)\dx \rt^{-q'} \!\!\! w(y) \fii^{q'}(y)\dy \label{77}\\
        & \approx  \sup_{k\in\K}\, \intdkmd \lt\ \int_\tkmd^y w(x)\dx \rt^{-q'} \!\!\! w(y) \dy\ \fii^{q'}(\tkmj) +  \sup_{k\in\K}\, \intdkmj \lt\ \int_\tkmj^y w(x)\dx \rt^{-q'} \!\!\! w(y) \fii^{q'}(y)\dy \nonumber\\
        & \lesssim \sup_{k\in\K}\, \intdkmj \lt\ \int_\tkmj^y w(x)\dx \rt^{-q'} \!\!\! w(y) \fii^{q'}(y)\dy. \nonumber
      \end{align}
    Step \eqref{76} is based on integration by parts yields (notice that $\qqp<0$), and \eqref{77} follows from Proposition \ref{21} with \eqref{3}. Hence, \eqref{78} is proved. Now, from \eqref{78} it follows that
      \[
        \B{12}  \le \sup_{k\in\K}\, \intdkmj \!\lt \int_0^y w(x)\dx \rt^{1-q'} \!\!\!\! \fii^{q'-1}(y)\fii'(y)\dy \, \lesssim\, \sup_{k\in\K}\, \intdkmj \! \lt\ \int_\tkmj^y w(x)\dx \rt^{-q'} \!\!\! w(y) \fii^{q'}(y)\dy.
      \]
    Next, one has
      \begin{align}
        \B{13} & \le      \sup_{k\in\K-1}\ \fii^p(\tk) \int_\tk^\infty \lt \int_0^y w(x)\dx \rt^{1-q'} \!\!\! \fii^{q'-p-1}(y)\fii'(y)\dy \nonumber\\
               & \lesssim \sup_{k\in\K-1}\ \fii^p(\tk) \intdk \lt \int_0^y w(x)\dx \rt^{1-q'} \!\!\! \fii^{q'-p-1}(y)\fii'(y)\dy \label{79}\\
               & \le      \sup_{k\in\K-1}\ \intdk \lt \int_0^y w(x)\dx \rt^{1-q'} \!\!\! \fii^{q'-1}(y)\fii'(y)\dy \nonumber\\
               & \lesssim \sup_{k\in\K}\, \intdkmj \! \lt\ \int_\tkmj^y w(x)\dx \rt^{-q'} \!\!\! w(y) \fii^{q'}(y)\dy. \label{80}
      \end{align}
    Proposition \ref{20} and \eqref{3} were used to get \eqref{79}, and \eqref{78} was used in \eqref{80}. For the term $\B{14}$ one gets
      \Bdef{15}\Bdef{16}
      \begin{align}
        \B{14} & =        \sup_{k\in\K-1}\, \int_0^\tk v(s)\ds \sum_{j=k}^{K-1} \intdj \lt \int_0^y w(x)\dx \rt^{1-q'} \!\!\! \Up(\tk,y) \fii^{q'-p-1}(y)\fii'(y)\dy \nonumber\\
               & \lesssim \sup_{k\in\K-2}\, \int_0^\tk v(s)\ds \sum_{j=k+1}^{K-1} \Up(\tk,\tj) \intdj \lt \int_0^y w(x)\dx \rt^{1-q'} \!\!\! \fii^{q'-p-1}(y)\fii'(y)\dy \nonumber\\
               & \quad +  \sup_{k\in\K-1}\, \int_0^\tk v(s)\ds \sum_{j=k}^{K-1} \intdj \lt \int_0^y w(x)\dx \rt^{1-q'} \!\!\! \Up(\tj,y) \fii^{q'-p-1}(y)\fii'(y)\dy\nonumber\\
               & =: \B{15}+\B{16}. \nonumber
      \end{align}
    By iterating the inequality $(a+b)^p\le 2^p(a^p+b^p)$ valid for $a,b\ge 0$, it is verified that
      \begin{equation}\label{81}
        \Up(\tk,\tj) \le \sum_{i=k}^{j-1} 2^{p(i-k+1)}\Up(\Delta_i) = 2^{-pk}\sum_{i=k}^{j-1} 2^{p(i+1)}\Up(\Delta_i)
      \end{equation}
    holds for any $j,k\in\K$ such that $k<j$. Then
      \begin{align}
        \B{15} & \le       \sup_{k\in\K-2}\, 2^{-pk} \int_0^\tk v(s)\ds \sum_{j=k+1}^{K-1} \sum_{i=k}^{j-1} 2^{p(i+1)}\Up(\Delta_i) \intdj \lt \int_0^y w(x)\dx \rt^{1-q'} \!\!\! \fii^{q'-p-1}(y)\fii'(y)\dy \label{82}\\
               & =         \sup_{k\in\K-2}\, 2^{-pk} \int_0^\tk v(s)\ds \sum_{i=k}^{K-2} 2^{p(i+1)} \Up(\Delta_i) \sum_{j=i+1}^{K-1} \intdj \lt \int_0^y w(x)\dx \rt^{1-q'} \!\!\! \fii^{q'-p-1}(y)\fii'(y)\dy \nonumber\\
               & =         \sup_{k\in\K-2}\, 2^{-pk} \int_0^\tk v(s)\ds \sum_{i=k}^{K-2} 2^{p(i+1)} \Up(\Delta_i) \int_{t_{i+1}}^\infty \lt \int_0^y w(x)\dx \rt^{1-q'} \!\!\! \fii^{q'-p-1}(y)\fii'(y)\dy \nonumber\\
               & \lesssim  \sup_{k\in\K-2}\, \int_0^\tk v(s)\ds \ \Up(\dk) \int_{t_{k+1}}^\infty \lt \int_0^y w(x)\dx \rt^{1-q'} \!\!\! \fii^{q'-p-1}(y)\fii'(y)\dy \label{83}\\
               & \le       \sup_{k\in\K-2}\, \fii^p(t_{k+1}) \int_{t_{k+1}}^\infty \lt \int_0^y w(x)\dx \rt^{1-q'} \!\!\! \fii^{q'-p-1}(y)\fii'(y)\dy \nonumber\\
               & \le       \sup_{k\in\K-2}\, \fii^p(t_{k+1}) \int_{\Delta_{k+1}} \lt \int_0^y w(x)\dx \rt^{1-q'} \!\!\! \fii^{q'-p-1}(y)\fii'(y)\dy \label{84}\\
               & \lesssim  \sup_{k\in\K}\, \int_{\Delta_{k-1}} \lt \int_0^y w(x)\dx \rt^{1-q'} \!\!\! \fii^{q'-1}(y)\fii'(y)\dy \nonumber\\
               & \lesssim  \sup_{k\in\K}\, \intdkmj \lt\ \int_\tkmj^y w(x)\dx \rt^{-q'} \!\!\! w(y) \fii^{q'}(y)\dy. \label{85}
      \end{align}
    In step \eqref{82} we used \eqref{81}. Inequality \eqref{83} follows from Proposition \ref{20} since 
      \begin{equation}\label{104}
        2\cdot 2^{-p(k-1)} \int_0^\tkmj v(s)\ds \le 2^{-pk} \int_0^\tk v(s)\ds
      \end{equation}
    for all $k\in\K$ thanks to \eqref{2}. Proposition \ref{20} and \eqref{3} also yield inequality \eqref{84}. The last step \eqref{85} is due to \eqref{78}. Next, the term $\B{16}$ is treated as follows.
      \begin{align}
        \B{16} & \lesssim \sup_{k\in\K-1}\, \int_0^\tk v(s)\ds \ \intdk \lt \int_0^y w(x)\dx \rt^{1-q'} \!\!\! \Up(\tk,y) \fii^{q'-p-1}(y)\fii'(y)\dy \label{86}\\
               & \le      \sup_{k\in\K-1}\, \intdk \lt \int_0^y w(x)\dx \rt^{1-q'} \!\!\! \Up(\tk,y) \fii^{q'-1}(y)\fii'(y)\dy \nonumber\\
               & \lesssim \sup_{k\in\K}\, \intdkmj \lt\ \int_\tkmj^y w(x)\dx \rt^{-q'} \!\!\! w(y) \fii^{q'}(y)\dy. \label{87}
      \end{align}
    In \eqref{86} we used Proposition \ref{20} and \eqref{2}, and in \eqref{87} we used \eqref{78}. At this point, we have proved the estimate
      \[
        A_3 \lesssim \lt \sup_{k\in\K}\, \intdkmj \lt\ \int_\tkmj^y w(x)\dx \rt^{-q'} \!\!\! w(y) \fii^{q'}(y)\dy \rt^{-\frac1{q'}}.
      \]
    The estimating now continues with the term $A_4$. One has
      \begin{align}
        A_4^{-q'} & \approx  \sup_{k\in\K}\ \sup_{t\in\dkmj} \fii^p(t) \int_t^\infty \lt \int_0^y w(x)\dx \rt^{1-q'} \!\!\! \fii^{q'-p-1}(y)\fii'(y)\dy \nonumber\\
                  & \lesssim \sup_{k\in\K}\ \sup_{t\in\dkmj} \fii^p(t) \int_t^\tk \lt \int_0^y w(x)\dx \rt^{1-q'} \!\!\! \fii^{q'-p-1}(y)\fii'(y)\dy \nonumber\\
                  & \quad +  \sup_{k\in\K-1}\ \fii^p(\tk) \int_\tk^\infty \lt \int_0^y w(x)\dx \rt^{1-q'} \!\!\! \fii^{q'-p-1}(y)\fii'(y)\dy \nonumber\\
                  & \lesssim \sup_{k\in\K}\ \intdk \lt \int_0^y w(x)\dx \rt^{1-q'} \!\!\! \fii^{q'-1}(y)\fii'(y)\dy \label{91}\\
                  & \quad +  \sup_{k\in\K-1}\ \fii^p(\tk) \intdk \lt \int_0^y w(x)\dx \rt^{1-q'} \!\!\! \fii^{q'-p-1}(y)\fii'(y)\dy \nonumber\\
                  & \lesssim \sup_{k\in\K}\ \intdk \lt \int_0^y w(x)\dx \rt^{1-q'} \!\!\! \fii^{q'-1}(y)\fii'(y)\dy \nonumber\\
                  & \lesssim \sup_{k\in\K}\, \intdkmj \lt\ \int_\tkmj^y w(x)\dx \rt^{-q'} \!\!\! w(y) \fii^{q'}(y)\dy. \label{92}
      \end{align}
    Inequality \eqref{91} follows from Proposition \ref{20} with \eqref{3}, and inequality \eqref{92} was proved in \eqref{78}. Hence,
      \[
        A_4 \lesssim \lt \sup_{k\in\K}\, \intdkmj \lt\ \int_\tkmj^y w(x)\dx \rt^{-q'} \!\!\! w(y) \fii^{q'}(y)\dy \rt^{-\frac1{q'}}.
      \]
    Next, the term $A_5$ can be estimated as follows.
      \begin{align}
        A_5 & \le      \sup_{k\in\K} \sup_{t\in\dkmj} \lt \int_0^t w(s)\ds \rt^\jq \fii^{-1}(t) \nonumber\\
            & \approx  \sup_{k\in\K} \sup_{t\in\dkmj} \lt\ \int_\tkmj^t w(s)\ds \rt^\jq \fii^{-1}(t) + \sup_{k\in\K} \lt \int_0^\tkmj w(s)\ds \rt^\jq \fii^{-1}(\tkmj) \nonumber\\
            & \lesssim \sup_{k\in\K} \sup_{t\in\dkmj} \lt\ \int_\tkmj^t w(s)\ds \rt^\jq \fii^{-1}(t) + \sup_{k\in\K} \lt\ \intdkmd w(s)\ds \rt^\jq \fii^{-1}(\tkmj) \label{93}\\
            & \lesssim \sup_{k\in\K} \sup_{t\in\dkmj} \lt\ \int_\tkmj^t w(s)\ds \rt^\jq \fii^{-1}(t) \nonumber \\
            & \lesssim \sup_{k\in\K} \sup_{t\in\dkmj} \lt\ \int_\tkmj^t w(s)\ds \rt^\jq \lt \int_t^\tk \fii^{q'-1}(y)\fii'(y) \dy \rt^{-\frac1{q'}}\nonumber \\
            & \lesssim \lt \sup_{k\in\K}\ \intdkmj \lt \int_0^y w(x)\dx \rt^{1-q'} \!\!\! \fii^{q'-1}(y)\fii'(y)\dy \rt^{-\frac1{q'}} \nonumber\\
            & \lesssim \lt \sup_{k\in\K}\, \intdkmj \lt\ \int_\tkmj^y w(x)\dx \rt^{-q'} \!\!\! w(y) \fii^{q'}(y)\dy \rt^{-\frac1{q'}}. \label{94}
      \end{align}
    Step \eqref{93} follows from Proposition \ref{21} and \eqref{3}, and step \eqref{94} from \eqref{78}. The estimate
      \[
        A_5 \lesssim \lt \sup_{k\in\K}\, \intdkmj \lt\ \int_\tkmj^y w(x)\dx \rt^{-q'} \!\!\! w(y) \fii^{q'}(y)\dy \rt^{-\frac1{q'}}
      \]
    is thus proven. Let $k\in\K$ be fixed and recall the definition of the function $\psi_{k}$. One has
      \begin{align}
        \lt\ \intdkmj \lt\ \int_\tkmj^t w(s)\ds \rt^{-q'} \!\! w(t) \fii^{-q'}(t) \dt \rt^{-\frac1{q'}} & \lesssim \lt\ \intdkmj \lt \int_t^\tk \psi_k(y)\dy \rt^q w(t)\dt \rt^\jq \label{88}\\
                                                                                                  & =        \lt \intnn \lt \int_t^\infty \psi_k(y)\dy \rt^q w(t)\dt \rt^\jq \nonumber\\
                                                                                                  & \le C    \lt \intnn v(t)\lt\int_t^\infty u(s) \int_s^\infty \psi_k(y)\dy\ds \rt^p \dt \rt^\jp \label{89}\\
                                                                                                  & \approx C\lt \sum_{j\in\K} \lt\ \intdjmj \fii(y)\psi_k(y) \dy \rt^p \rt^\jp \label{90}\\
                                                                                                  & = C \intdkmj \fii(y)\psi_k(y) \dy \nonumber\\
                                                                                                  & = C.\nonumber
      \end{align}
    Step \eqref{88} follows from \eqref{74}, step \eqref{89} follows from \eqref{34}, and step \eqref{90} from Theorem \ref{18}. Finally, we have proved
      \[
        A_3 + A_4 + A_5 \lesssim \lt \sup_{k\in\K}\, \intdkmj \lt\ \int_\tkmj^y w(x)\dx \rt^{-q'} \!\!\! w(y) \fii^{q'}(y)\dy \rt^{-\frac1{q'}} \!\!\lesssim C.
      \]
   
    (iv) Let $q<p<\infty$. Then
      \Bdef{17}\Bdef{18}\Bdef{19}
      \begin{align}
        A_6^r & \approx  \sumk\ \intdkmj v(t) \lt \int_t^\infty \lt \int_0^y w(x)\dx \rt^{1-q'} \hspace{-4pt} U^{\frac{p-q}{1-q}}(t,y) \fii^{pq'-p-1}(y)\fii'(y)\dy \rt^{-\frac r{q'}} \hspace{-4pt} \dt \nonumber\\
              & \lesssim \sumk\ \intdkmj v(t) \lt \int_t^\tk \lt \int_0^y w(x)\dx \rt^{1-q'} \hspace{-4pt} U^{\frac{p-q}{1-q}}(t,y) \fii^{pq'-p-1}(y)\fii'(y)\dy \rt^{-\frac r{q'}} \hspace{-4pt} \dt \nonumber\\
              & \quad +  \sum_{k\in\K-1}\ \intdkmj v(t)\Up(t,\tk)\dt\ \lt\, \int_\tk^\infty \lt \int_0^y w(x)\dx \rt^{1-q'} \hspace{-4pt} \fii^{pq'-p-1}(y)\fii'(y)\dy \rt^{-\frac r{q'}} \nonumber\\
              & \quad +  \sum_{k\in\K-1}\ \intdkmj v(t)\dt\ \lt \int_\tk^\infty \lt \int_0^y w(x)\dx \rt^{1-q'} \hspace{-4pt} U^{\frac{p-q}{1-q}}(\tk,y) \fii^{pq'-p-1}(y)\fii'(y)\dy \rt^{-\frac r{q'}} \nonumber\\
              & =: \B{17}+\B{18}+\B{19}. \nonumber        
      \end{align}
    Observe that
      \begin{equation}\label{95}  
        \sum_{k\in\K} \lt\ \intdkmj \! \lt \int_0^y w(x)\dx \rt^{1-q'} \!\!\!\!\!\! \fii^{q'-1}(y)\fii'(y)\dy \rt^{-\frac r{q'}} \!\!\!\!\! \lesssim \sum_{k\in\K} \lt\ \intdkmj \!\! \lt\ \int_\tkmj^y w(x)\dx \rt^{-q'} \!\!\!\!\! w(y) \fii^{q'}(y)\dy \rt^{-\frac r{q'}}\!\!\!\!.
      \end{equation}
    This relation is proved analogously as \eqref{78} was proved earlier, replacing the supremum in \eqref{78} by the sum and using an~appropriate version of Proposition \ref{21}. Next, we have
      \begin{align}
        \B{17} & =        \sum_{k\in\Kj}\ \intdkmj v(t) \lt \int_t^\tk \lt \int_0^y w(x)\dx \rt^{1-q'} \hspace{-4pt} U^{\frac{p-q}{1-q}}(t,y) \fii^{pq'-p-1}(y)\fii'(y)\dy \rt^{-\frac r{q'}} \hspace{-4pt} \dt \nonumber\\
               & \quad +  \sum_{k\in\Kd}\ \intdkmj v(t) \lt \int_t^\tk \lt \int_0^y w(x)\dx \rt^{1-q'} \hspace{-4pt} U^{\frac{p-q}{1-q}}(t,y) \fii^{pq'-p-1}(y)\fii'(y)\dy \rt^{-\frac r{q'}} \hspace{-4pt} \dt \nonumber\\
               & \le      \sum_{k\in\Kj}\ \intdkmj v(t)\dt\ \lt\ \intdkmj \lt \int_0^y w(x)\dx \rt^{1-q'} \hspace{-4pt} U^{\frac{p-q}{1-q}}(\tkmj,y) \fii^{pq'-p-1}(y)\fii'(y)\dy \rt^{-\frac r{q'}} \nonumber\\
               & \quad +  \sum_{k\in\Kd}\ \intdkmj v(t)\Up(t,\tk)\dt\ \lt \int_t^\tk \lt \int_0^y w(x)\dx \rt^{1-q'} \hspace{-4pt} \fii^{pq'-p-1}(y)\fii'(y)\dy \rt^{-\frac r{q'}} \nonumber\\
               & \le      \sum_{k\in\Kj}\ \int_0^\tk v(t)\dt\ \lt\ \intdkmj \lt \int_0^y w(x)\dx \rt^{1-q'} \hspace{-4pt} U^{\frac{p-q}{1-q}}(\tkmj,y) \fii^{pq'-p-1}(y)\fii'(y)\dy \rt^{-\frac r{q'}} \nonumber\\
               & \quad +  \sum_{k\in\Kd}\ \fii^p(\tk)\dt\ \lt \int_t^\tk \lt \int_0^y w(x)\dx \rt^{1-q'} \hspace{-4pt} \fii^{pq'-p-1}(y)\fii'(y)\dy \rt^{-\frac r{q'}} \nonumber\\
               & \lesssim \sum_{k\in\Kj}\ \int_0^\tkmj v(t)\dt\ \lt\ \intdkmj \lt \int_0^y w(x)\dx \rt^{1-q'} \hspace{-4pt} U^{\frac{p-q}{1-q}}(\tkmj,y) \fii^{pq'-p-1}(y)\fii'(y)\dy \rt^{-\frac r{q'}} \label{96}\\
               & \quad +  \sum_{k\in\Kd}\ \fii^p(\tkmj)\dt\ \lt \int_t^\tk \lt \int_0^y w(x)\dx \rt^{1-q'} \hspace{-4pt} \fii^{pq'-p-1}(y)\fii'(y)\dy \rt^{-\frac r{q'}} \nonumber\\
               & \lesssim \sum_{k\in\K}\ \lt\ \intdkmj \lt \int_0^y w(x)\dx \rt^{1-q'} \hspace{-4pt} \fii^{q'-1}(y)\fii'(y)\dy \rt^{-\frac r{q'}} \nonumber \\
               & \lesssim \sum_{k\in\K}\ \lt\ \intdkmj \lt\ \int_\tkmj^y w(x)\dx \rt^{-q'} \!\!\! w(y) \fii^{q'}(y)\dy\rt^{-\frac r{q'}} \!\!\!. \label{97}
      \end{align}
    Step \eqref{96} is based on \eqref{4} and \eqref{5}. In \eqref{97} we used \eqref{95}. Let us continue with $\B{18}$.
      \begin{align}
        \B{18} & \le      \sum_{k\in\K-1}\ \fii^p(\tk) \lt\, \int_\tk^\infty \lt \int_0^y w(x)\dx \rt^{1-q'} \hspace{-4pt} \fii^{pq'-p-1}(y)\fii'(y)\dy \rt^{-\frac r{q'}} \nonumber\\   
               & \lesssim \sum_{k\in\K-1}\ \fii^p(\tk) \lt\ \intdk \lt \int_0^y w(x)\dx \rt^{1-q'} \hspace{-4pt} \fii^{pq'-p-1}(y)\fii'(y)\dy \rt^{-\frac r{q'}} \label{98}\\
               & \le      \sum_{k\in\K-1} \lt\ \intdk \lt \int_0^y w(x)\dx \rt^{1-q'} \hspace{-4pt} \fii^{q'-1}(y)\fii'(y)\dy \rt^{-\frac r{q'}} \nonumber\\
               & \lesssim \sum_{k\in\K}\lt\ \intdkmj \lt\ \int_\tkmj^y w(x)\dx \rt^{-q'} \!\!\! w(y) \fii^{q'}(y)\dy \rt^{-\frac r{q'}}\!\!\!\!. \label{99}
      \end{align}
    Inequality \eqref{98} is a~consequence of Proposition \ref{20} with \eqref{3}, and in \eqref{99} one uses \eqref{95}. We proceed as follows.
      \Bdef{20}\Bdef{21}
      \begin{align*}
        \B{19} & =        \sum_{k\in\K-1}\ \intdkmj v(t)\dt\ \lt \sum_{j=k}^{K-1} \intdj \lt \int_0^y w(x)\dx \rt^{1-q'} \hspace{-4pt} U^{\frac{p-q}{1-q}}(\tk,y) \fii^{pq'-p-1}(y)\fii'(y)\dy \rt^{-\frac r{q'}} \\
               & \lesssim \sum_{k\in\K-2}\ \intdkmj v(t)\dt\ \lt \sum_{j=k+1}^{K-1} U^{\frac{p-q}{1-q}}(\tk,\tj) \intdj \lt \int_0^y w(x)\dx \rt^{1-q'} \hspace{-4pt} \fii^{pq'-p-1}(y)\fii'(y)\dy \rt^{-\frac r{q'}} \\
               & \quad +  \sum_{k\in\K-1}\ \intdkmj v(t)\dt\ \lt \sum_{j=k}^{K-1} \intdj \lt \int_0^y w(x)\dx \rt^{1-q'} \hspace{-4pt} U^{\frac{p-q}{1-q}}(\tj,y) \fii^{pq'-p-1}(y)\fii'(y)\dy \rt^{-\frac r{q'}} \\
               & =: \B{20} +\B{21}.
      \end{align*}
    Next, we have
      \begin{align}
        \B{20} & \le       \sum_{k\in\K-2} 2^{-pk} \int_0^\tk v(s)\ds  \label{100}\\
               & \qquad \times \lt \sum_{j=k+1}^{K-1} \sum_{i=k}^{j-1} \, 2^{\frac{p-q}{1-q}(i+1)} U^{\frac{p-q}{1-q}}(\Delta_i) \intdj  \lt \int_0^y  w(x)\dx \rt^{1-q'} \hspace{-6pt} \fii^{pq'-p-1}(y)\fii'(y)\dy \rt^{\!-\frac r{q'}} \nonumber\\
               & =         \sum_{k\in\K-2} 2^{-pk} \int_0^\tk v(s)\ds \nonumber\\
               & \qquad \times \lt \sum_{i=k}^{K-2} \, 2^{\frac{p-q}{1-q}(i+1)} U^{\frac{p-q}{1-q}}(\Delta_i) \sum_{j=i+1}^{K-1} \intdj \lt \int_0^y w(x)\dx \rt^{1-q'} \hspace{-6pt} \fii^{pq'-p-1}(y)\fii'(y)\dy \rt^{\!-\frac r{q'}}  \nonumber\\
               & =         \sum_{k\in\K-2} 2^{-pk} \int_0^\tk v(s)\ds \nonumber\\
               & \qquad \times \lt \sum_{i=k}^{K-2} \, 2^{\frac{p-q}{1-q}(i+1)} U^{\frac{p-q}{1-q}}(\Delta_i) \int_{t_{i+1}}^\infty \lt \int_0^y w(x)\dx \rt^{1-q'} \hspace{-6pt} \fii^{pq'-p-1}(y)\fii'(y)\dy \rt^{\!-\frac r{q'}} \nonumber\\
               & \lesssim  \sum_{k\in\K-2}\, \int_0^\tk v(s)\ds \ \Up(\dk) \lt\ \int_{t_{k+1}}^\infty \lt \int_0^y w(x)\dx \rt^{1-q'} \!\!\! \fii^{pq'-p-1}(y)\fii'(y)\dy \rt^{\!-\frac r{q'}} \label{101}\\
               & \le       \sum_{k\in\K-2}\, \fii^p(t_{k+1}) \lt\ \int_{t_{k+1}}^\infty \lt \int_0^y w(x)\dx \rt^{1-q'} \!\!\! \fii^{pq'-p-1}(y)\fii'(y)\dy \rt^{\!-\frac r{q'}} \nonumber\\
               & \le       \sum_{k\in\K-2}\, \fii^p(t_{k+1}) \lt\ \int_{\Delta_{k+1}} \lt \int_0^y w(x)\dx \rt^{1-q'} \!\!\! \fii^{pq'-p-1}(y)\fii'(y)\dy \rt^{\!-\frac r{q'}} \label{102}\\
               & \lesssim  \sum_{k\in\K}\, \lt\ \int_{\Delta_{k-1}} \lt \int_0^y w(x)\dx \rt^{1-q'} \!\!\! \fii^{q'-1}(y)\fii'(y)\dy \rt^{\!-\frac r{q'}} \nonumber\\
               & \lesssim  \sum_{k\in\K}\, \lt\ \intdkmj \lt\ \int_\tkmj^y w(x)\dx \rt^{-q'} \!\!\! w(y) \fii^{q'}(y)\dy \rt^{\!-\frac r{q'}}. \label{103}
      \end{align}
    Step \eqref{100} follows from \eqref{81} here $p$ is replaced by $\frac{p-q}{1-q}$. In estimate \eqref{101} we used Proposition \ref{20}, considering also \eqref{104}. Proposition \ref{20} together with \eqref{3} also gives \eqref{102}. In the last step \eqref{103} we used \eqref{99}. Let us now estimate the term $\B{21}$.
      \begin{align}
        \B{21} & \lesssim \sum_{k\in\K-1}\ \intdkmj v(t)\dt\ \lt\ \intdk \lt \int_0^y w(x)\dx \rt^{1-q'} \hspace{-4pt} U^{\frac{p-q}{1-q}}(\tk,y) \fii^{pq'-p-1}(y)\fii'(y)\dy \rt^{-\frac r{q'}} \label{105}\\
               & \lesssim  \sum_{k\in\K}\, \lt\ \int_{\Delta_{k-1}} \lt \int_0^y w(x)\dx \rt^{1-q'} \!\!\! \fii^{q'-1}(y)\fii'(y)\dy \rt^{\!-\frac r{q'}} \nonumber\\
               & \lesssim  \sum_{k\in\K}\, \lt\ \intdkmj \lt\ \int_\tkmj^y w(x)\dx \rt^{-q'} \!\!\! w(y) \fii^{q'}(y)\dy \rt^{\!-\frac r{q'}}. \label{106}
      \end{align}
    As usual, \eqref{105} follows from Proposition \ref{20} with \eqref{2}, and \eqref{106} follows from \eqref{99}. Combining the obtained estimates, we finally get
      \[
        A_6 \lesssim \lt \sum_{k\in\K}\, \lt\ \intdkmj \lt\ \int_\tkmj^y w(x)\dx \rt^{-q'} \!\!\! w(y) \fii^{q'}(y)\dy \rt^{\!-\frac r{q'}} \rt^\jr.
      \]
    By saturation of the discrete H\"older inequality (Proposition \ref{40}), there exists a~nonnegative sequence $\{d_k\}_{k\in\K}$ such that $\sumk d_k^p=1$ and 
      \begin{align}
        &\lt \sum_{k\in\K}\ \lt\ \intdkmj \lt\ \int_\tkmj^y w(x)\dx \rt^{-q'} \!\!\! w(y) \fii^{q'}(y)\dy \rt^{\!-\frac r{q'}}\rt^\jr \nonumber\\
        & \quad \lesssim \lt \sum_{k\in\K}\ d_k^q \lt\ \intdkmj \lt\ \int_\tkmj^y w(x)\dx \rt^{-q'} \!\!\! w(y) \fii^{q'}(y)\dy \rt^{\!-\frac q{q'}} \rt^\jq. \label{107}
      \end{align}
    Considering the functions $\psi_k$ defined previously, put $\psi:=\sumk d_k\psi_k$. Then
      \begin{align}
        & \lt \sum_{k\in\K}\ \lt\ \intdkmj \lt\ \int_\tkmj^y w(x)\dx \rt^{-q'} \!\!\! w(y) \fii^{q'}(y)\dy \rt^{\!-\frac r{q'}}\rt^\jr \nonumber\\
        & \lesssim \lt \sum_{k\in\K}\ d_k^q \lt\ \intdkmj \lt\ \int_\tkmj^y w(x)\dx \rt^{-q'} \!\!\! w(y) \fii^{q'}(y)\dy \rt^{\!-\frac q{q'}} \rt^\jq \label{108} \\
        & \lesssim \lt \sum_{k\in\K}\ d_k^q \intdkmj \lt \int_t^\tk \psi_k(y)\dy \rt^{q} w(t)\dt \rt^{\jq} \label{109} \\
        & =        \lt \sum_{k\in\K}\ \intdkmj \lt \int_t^\tk \psi(y)\dy \rt^{q} w(t)\dt \rt^{\jq} \nonumber \\
        & =        \lt \intnn \lt \int_t^\infty \psi(y)\dy \rt^{q} w(t)\dt \rt^{\jq} \nonumber \\
        & \le C    \lt \intnn v(t)\lt\int_t^\infty u(s) \int_s^\infty \psi(y)\dy\ds \rt^p \dt \rt^\jp \label{110}\\
        & \approx C\lt \sum_{k\in\K} \lt\ \intdkmj \fii(y)\psi(y) \dy \rt^p \rt^\jp \label{111}\\
        & = C      \lt \sum_{k\in\K}\, d_k^p \lt\ \intdkmj \fii(y)\psi_k(y) \dy \rt^p \rt^\jp \nonumber\\
        & = C. \nonumber
      \end{align}
    Inequality \eqref{108} is identical with \eqref{106}, in \eqref{109} one uses \eqref{74}, inequality \eqref{110} follows from \eqref{34}, and \eqref{111} is granted by Theorem \ref{18}. We have shown
      \[
        A_6 \lesssim \lt \sum_{k\in\K}\, \lt\ \intdkmj \lt\ \int_\tkmj^y w(x)\dx \rt^{-q'} \!\!\! w(y) \fii^{q'}(y)\dy \rt^{\!-\frac r{q'}} \rt^\jr \le C
      \]
    and the proof is finished.  
\end{proof}

As it is common when dealing with embeddings of rearrangement-invariant spaces (cf.~\cite{CPSS,GP,GS}), there exist more equivalent conditions characterizing the embedding $\CLj\hra \Lambda^q(w)$. The conditions presented in Theorem \ref{32} were chosen in particular because the weight $w$ appears only at a~single point (in the term $\int_0^t w(s)\ds$) in each of them. Such form is favorable considering the intended use of the results to describe the associated space of $\CL$ in Section 5. In case of interest, the reader may derive other equivalent conditions, possibly with the help of the discrete conditions which appear in the proof of Theorem \ref{32}.


Let us complete the work by extending the results to the case of $\CL$ with a~general positive parameter $m$, using Theorem \ref{32} and Proposition \ref{33}. 

\begin{cor}\label{113}
  Let $m,p,q\in(0,\infty)$, let $(u,v)$ be an~admissible pair of weights with respect to $(m,p)$ and let $w$ be a weight. 
  
  {\rm(i)} 
        Let $0<m\le q<\infty$ and $0<p\le q$. Then \eqref{30} holds for all $f\in\M$ with a~constant $C>0$ independent of $f$ if and only if 
          \[
            A_7 := \sup_{t\in(0,\infty)} \lt\int_0^t w(s)\ds \rt^\jq \lt\int_0^t v(s)\Upm(s,t)\ds \rt^{-\frac1p} <\infty.
          \]
        Moreover, the optimal constant $C$ in \eqref{30} satisfies $C\approx A_7$.
  
  {\rm(ii)} 
        Let $0<m\le q <p<\infty$. Then \eqref{30} holds for all $f\in\M$ with a~constant $C>0$ independent of $f$ if and only if
          \[ 
            A_8 := \lt \intnn v(t) \sup_{y\in(t,\infty)} \frac{\Upm(t,y) \lt \int_0^y w(s)\ds \rt^\frac{p}{p-q} }{ \lt \int_0^y v(x)\Upm(x,y)\dx \rt^{\frac{p}{p-q}}} \dt \rt^\frac{p-q}{pq} < \infty.
          \]
        Moreover, the optimal constant $C$ in \eqref{30} satisfies $C\approx A_8$.
  
  {\rm(iii)} 
        Let $0<p\le q<m<\infty$. Then \eqref{30} holds for all $f\in\M$ with a~constant $C>0$ independent of $f$ if and only if 
          \[ 
            A_9 := \sup_{t>0} \lt \int_0^t v(s)\ds  \int_t^\infty \frac{ \lt \int_0^y w(x)\dx \rt^{\frac m{m-q}} \! \Upm(t,y) u(y) \int_0^y v(s) U^{\frac pm-1}(s,y)\ds}{ \lt \int_0^y v(x)\Upm(x,y)\dx \rt^{2+\frac{mq}{p(m-q)}}} \dy \rt^{\frac{m-q}{mq}} \!\! < \infty,
          \]
          \[ 
            A_{10} := \sup_{t>0} \lt \int_0^t v(s)\Upm(s,t)\ds  \int_t^\infty \frac{ \lt \int_0^y w(x)\dx \rt^{\frac m{m-q}} \!u(y) \int_0^y v(s) U^{\frac pm-1}(s,y) \ds}{ \lt \int_0^y v(x)\Upm(x,y)\dx \rt^{2+\frac{mq}{p(m-q)}}} \dy \rt^{\frac{m-q}{mq}} \!\!\!\! < \infty
          \]
        and
          \[
            A_{11} := \lt \intnn w(t)\dt \rt^\jq \lt\int_0^\infty v(s)\Upm(s,\infty)\ds \rt^{-\frac1p} <\infty.
          \]
        Moreover, the optimal constant $C$ in \eqref{34} satisfies $C\approx A_9 + A_{10} + A_{11}$.
  
  {\rm(iv)} 
        Let $0<q<m<\infty$ and $q<p<\infty$. Then \eqref{30} holds for all $f\in\M$ with a~constant $C>0$ independent of $f$ if and only if $A_{11}<\infty$ and
          \[
            A_{12} := \lt \intnn v(t) \lt \int_t^\infty \frac{ \lt \int_0^y w(x)\dx \rt^{\frac m{m-q}} U^\frac{p-q}{m-q}(t,y) u(y) \int_0^y v(s) U^{\frac pm-1}(s,y)\!\ds}{ \lt \int_0^y v(x)\Upm(x,y)\dx \rt^{1+\frac{m}{m-q}}} \dy \rt^{\frac{p(m-q)}{m(p-q)}} \hspace{-6pt} \dt \rt^\frac{p-q}{pq} \!\! < \infty,
          \]  
        Moreover, the optimal constant $C$ in \eqref{30} satisfies $C\approx A_{11} + A_{12}$.
\end{cor}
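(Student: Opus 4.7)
The plan is to reduce Corollary \ref{113} to Theorem \ref{32} via Proposition \ref{33}. By Proposition \ref{33}, inequality \eqref{30} with optimal constant $C$ is equivalent to
\[
  \lt \intnn w(t)(\f(t))^{q/m}\dt \rt^{m/q} \le C^m \lt \intnn v(t)\lt \inttn u(s)\f(s)\ds \rt^{p/m} \dt \rt^{m/p},
\]
which is precisely \eqref{34} with $(p,q)$ replaced by $(\tilde p, \tilde q) := (p/m, q/m)$ and $C$ replaced by $\tilde C := C^m$. Admissibility of $(u,v)$ with respect to $(m,p)$ is equivalent to admissibility with respect to $(1, p/m)$, since the fundamental function of $C\!L^{1, p/m}(u,v)$ equals $\fii^m$ and hence has the same zero/infinity sets as $\fii$.

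The four cases of Theorem \ref{32}, applied to $(\tilde p, \tilde q)$, correspond exactly to those of Corollary \ref{113}: the condition $\tilde q \ge 1$ translates to $q \ge m$, and $\tilde p \le \tilde q$ (resp.~$\tilde p > \tilde q$) translates to $p \le q$ (resp.~$p > q$). The auxiliary exponents transform as $\tilde q' = q/(q-m)$ and $\tilde r = \tilde p\tilde q/(\tilde p - \tilde q) = pq/(m(p-q))$. Theorem \ref{32} then yields $\tilde C \approx \tilde A_i$ (or a sum of such), and taking the $m$-th root via $C = \tilde C^{1/m}$, combined with the elementary fact $(A+B)^{1/m} \approx A^{1/m} + B^{1/m}$ for $A, B \ge 0$, produces the claimed equivalents in terms of $A_7, \ldots, A_{12}$.

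All that remains is to verify that each $\tilde A_i^{1/m}$ matches the corresponding $A_j$ by substituting the transformed exponents. In case (i) one obtains $\tilde A_1^{1/m} = A_7$ directly from $1/\tilde q = m/q$ and $-1/\tilde p = -m/p$; in case (ii), $\tilde A_2^{1/m} = A_8$ using $\tilde r/\tilde q = p/(p-q)$ and $1/\tilde r = m(p-q)/(pq)$; in case (iii), $(\tilde A_3^{1/m}, \tilde A_4^{1/m}, \tilde A_5^{1/m}) = (A_9, A_{10}, A_{11})$ using additionally $1-\tilde q' = m/(m-q)$, $-1/\tilde q' = (m-q)/q$, $\tilde p - 1 = (p-m)/m$ and $2 - \tilde q'/\tilde p = 2 + mq/(p(m-q))$; and in case (iv), $(\tilde A_5^{1/m}, \tilde A_6^{1/m}) = (A_{11}, A_{12})$ using also $(\tilde p - \tilde q)/(1-\tilde q) = (p-q)/(m-q)$, $2 - \tilde q' = 1 + m/(m-q)$ and $-\tilde r/\tilde q' = p(m-q)/(m(p-q))$. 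The sole obstacle is faithful bookkeeping of these exponent substitutions; once performed, the corollary follows with no further analysis.
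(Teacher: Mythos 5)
Your proposal is correct and is essentially the paper's own argument: the corollary is stated there as a direct consequence of Proposition \ref{33} (rescaling $f\mapsto f^m$, i.e.\ passing to \eqref{34} with exponents $p/m$, $q/m$ and constant $C^m$) combined with Theorem \ref{32}. Your exponent bookkeeping ($\tilde q'=q/(q-m)$, $\tilde r=pq/(m(p-q))$, $1-\tilde q'=m/(m-q)$, $2-\tilde q'/\tilde p=2+mq/(p(m-q))$, $2-\tilde q'=1+m/(m-q)$, $-\tilde r/\tilde q'=p(m-q)/(m(p-q))$) indeed turns $A_1,\dots,A_6$ into $A_7,\dots,A_{12}$, so nothing further is needed.
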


\section{Associate space to $C\!L$}

Finally, it is possible to give an~explicit description of the associate space $(\CL)'$ which is generated by the functional
  \[
    \|g\|_{\lt\CL\rt'} := \sup \left\{ \intnn \f(t)\g(t)\dt;\ \|f\|_{\CL}\le 1 \right\},
  \]
defined for any $g\in\M$. As it was mentioned in the introduction, the value of $\|g\|_{\lt\CL\rt'}$ is equal to the optimal constant of the embedding $\CL\hra \Lambda^1(\g)$, hence the results of the previous chapter may be directly applied here.

Once again, the reader should be reminded of the use of convention \eqref{116} in the formulas. Besides that, the notation $\|\cdot\|_1$ is used for the norm in the Lebesgue space $L^1$.
  
\begin{thm}\label{114}
  Let $m,p\in(0,\infty)$, let $(u,v)$ be an~admissible pair of weights with respect to $(m,p)$. Let $g\in\M$.
  
  {\rm(i)} 
        Let $0<m\le 1$ and $0<p \le 1$. Then
          \[  
            \|g\|_{\lt\CL\rt'} \approx \sup_{t>0}\ \gg(t)\, t \lt \int_0^t v(s) \Upm(s,t) \ds \rt^\mjp.
          \]
  
  {\rm(ii)} 
        Let $0<m\le 1<p<\infty$. Then
          \[  
            \|g\|_{\lt\CL\rt'} \approx \lt \intnn v(t) \sup_{y\in(t,\infty)} \frac{\Upm(t,y) (\gg(y))^{p'} y^{p'} }{ \lt \int_0^y v(s) \Upm(s,y) \ds \rt^{p'}} \dy \rt^\frac1{p'}.
          \]
  
  {\rm(iii)} 
        Let $0<p\le 1<m<\infty$. Then
          \begin{align*}  
            & \|g\|_{\lt\CL\rt'} \\
                               & \approx \|g\|_1 \lt \intnn v(s) \Upm(s,\infty) \ds \rt^\mjp \\
                               & \quad + \sup_{t>0} \lt \int_0^t v(s)\ds  \int_t^\infty \frac{ (\gg(y))^{m'} y^{m'} \Upm(t,y) u(y) \int_0^y v(s) U^{\frac pm-1}(s,y)\ds}{ \lt \int_0^y v(s) \Upm(s,y) \ds \rt^{2+\frac{m'}p}} \dy \rt^{\frac1{m'}} \\
                               & \quad + \sup_{t>0} \lt \int_0^t v(s) \Upm(s,t) \ds \int_t^\infty \frac{ (\gg(y))^{m'} y^{m'} \!u(y) \int_0^y v(s) U^{\frac pm-1}(s,y) \ds}{ \lt \int_0^y v(s) \Upm(s,y) \ds \rt^{2+\frac{m'}p} } \dy \rt^{\frac1{m'}} \!\!\!\! .
          \end{align*}
  
  {\rm(iv)} 
        Let $1<m<\infty$ and $1<p<\infty$. Then 
          \begin{align*}  
            & \|g\|_{\lt\CL\rt'} \\
                               & \approx \|g\|_1 \lt \intnn v(s) \Upm(s,\infty) \ds \rt^\mjp \\
                               & \quad + \lt \intnn v(t) \lt \int_t^\infty \frac{ (\gg(y))^{m'} y^{m'} U^\frac{p-1}{m-1}(t,y) u(y) \int_0^y v(s) U^{\frac pm-1}(s,y) \ds}{ \lt \int_0^y v(s) \Upm(s,y) \! \ds \rt^{1+m'}} \dy \rt^\frac{p(m-1)}{m(p-1)} \hspace{-8pt} \dt\rt^{\frac1{p'}} \!\!\!.
          \end{align*}  
\end{thm}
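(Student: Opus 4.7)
The plan is to reduce Theorem \ref{114} directly to Corollary \ref{113} specialized to $q=1$ and $w=\g$, making use of the definition of the associate functional.

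First, observe that by definition,
  \[
    \|g\|_{(\CL)'} = \sup\left\{ \intnn \f(t)\g(t)\dt\ ;\ \|f\|_\CL \le 1 \right\}
  \]
is precisely the optimal constant in the inequality $\intnn \f(t)\g(t)\dt \le C \|f\|_\CL$ valid for all $f\in\M$. Indeed, this follows by homogeneity: the inequality with constant $C$ holds for every $f\in\M$ if and only if $\intnn\f\g\dt\le C$ whenever $\|f\|_\CL\le 1$. But that inequality is exactly \eqref{30} with $q=1$ and the weight $w$ replaced by $\g$. Thus
  \[
    \|g\|_{(\CL)'} = \inf\{C\in(0,\infty]\,;\,\CL\hra \Lambda^1(\g)\ \text{with constant}\ C\}.
  \]

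Next, I would invoke Corollary \ref{113} with $q=1$ and $w=\g$. The case split in Theorem \ref{114} corresponds exactly to the four cases of Corollary \ref{113}: (i) $m\le 1$ and $p\le 1$ is $m\le q$ and $p\le q$; (ii) $m\le 1<p$ is $m\le q<p$; (iii) $p\le 1<m$ is $p\le q<m$; (iv) $1<m$ and $1<p$ is $q<m$ and $q<p$. Substituting $w=\g$ throughout, the only nontrivial simplification is the identity
  \[
    \int_0^t \g(s)\ds = t\,\gg(t),\qquad t>0,
  \]
which is the definition of the Hardy-Littlewood maximal function of $\g$. Applying this identity wherever $\int_0^y w(x)\dx$ appears in the expressions $A_7,\dots,A_{12}$ converts them termwise into the expressions displayed in cases (i)-(iv) of Theorem \ref{114}. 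For instance, in case (iv), the factor $\bigl(\int_0^y w(x)\dx\bigr)^{m/(m-q)}$ becomes $(\gg(y))^{m'}y^{m'}$ since $m/(m-1)=m'$, and the exponent $p(m-q)/(m(p-q))$ reduces to $p(m-1)/(m(p-1))$; the other exponents $1/q'=1/p'$, $(p-q)/(m-q)=(p-1)/(m-1)$ and $(p-q)/(pq)=1/p'$ match as well. The same bookkeeping applies to the remaining three cases.

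The proof is therefore essentially a direct substitution and translation of exponents. The only point requiring attention is the verification, for each case, that the constants from Corollary \ref{113} dovetail with the exponent identities ($1-q'\mapsto m/(m-1)=m'$, $r/q'\mapsto 1/p'$, etc.), and that the admissibility assumption on $(u,v)$ (which Theorem \ref{114} inherits) ensures the expressions are well-defined. I do not anticipate a genuine obstacle: the hard analytical work has already been carried out in Theorem \ref{32} and its consequence Corollary \ref{113}, and everything needed at this stage is bookkeeping on the exponents and the substitution $\int_0^t \g = t\gg(t)$.
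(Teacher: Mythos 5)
Your proposal is correct and is exactly the route the paper intends: the associate norm is the optimal constant of the embedding $\CL\hra\Lambda^1(\g)$, so Corollary \ref{113} with $q=1$ and $w=\g$, together with the identities $\int_0^t\g(s)\ds=t\,\gg(t)$ and $\intnn\g=\|g\|_1$, yields all four cases after the exponent substitutions you describe. The bookkeeping you outline ($m/(m-1)=m'$, $(p-q)/(pq)\mapsto 1/p'$, etc.) checks out against the stated formulas.
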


\end{document}